\newtheorem{theorem}{Theorem}[section]
\newtheorem{lemma}[theorem]{Lemma}
\newtheorem{proposition}[theorem]{Proposition}
\newtheorem{corollary}[theorem]{Corollary}
\newtheorem{defn}[theorem]{Definition}
\newtheorem{lthm}{Theorem} 
\theoremstyle{remark}
\newtheorem{remark}[theorem]{Remark}
\newcommand{\Gal}{\mathrm{Gal}}
\newcommand{\BSymb}{\mathrm{BSymb}}
\newcommand{\eval}{\mathrm{eval}}
\newcommand{\Hom}{\mathrm{Hom}}
\newcommand{\Symb}{\mathrm{Symb}}
\newcommand{\cG}{\mathcal{G}}
\newcommand{\SL}{\mathrm{SL}}
\newcommand{\vp}{\varphi}
\newcommand{\GL}{\mathrm{GL}}
\newcommand{\Div}{\mathrm{Div}}
\newcommand{\ord}{\mathrm{ord}}
\newcommand{\ZZ}{\mathbb{Z}}
\newcommand{\CC}{\mathbb{C}}
\newcommand{\NN}{\mathbb{N}}
\newcommand{\QQ}{\mathbb{Q}}
\newcommand{\Zp}{\ZZ_p}
\definecolor{Green}{rgb}{0.0, 0.5, 0.0}
\renewcommand{\Im}{\mathrm{Im}}
\DeclareMathSymbol\dDelta  \mathord{bbold}{"01}
\numberwithin{equation}{section}
\title[Ramanujan's tau function and Eisenstein series]{Lambda-invariants of Mazur--Tate elements attached to Ramanujan's tau function and congruences with Eisenstein series}
\author[A.~Doyon]{Anthony Doyon}
\address[Doyon]{Anthony Doyon\newline D\'epartement de Math\'ematiques\\
C\'egep Beauce-Appalaches, Saint-Georges\\
1055 116e rue\\
Saint-Georges, QC\\
Canada G5Y 3G1}
\email{adoyon@cegepba.qc.ca}
\author[A.~Lei]{Antonio Lei}
\address[Lei]{Antonio Lei\newline Department of Mathematics and Statistics\\University of Ottawa\\
150 Louis-Pasteur Pvt\\
Ottawa, ON\\
Canada K1N 6N5}
\email{antonio.lei@uottawa.ca}
\keywords{Ramanujan's tau function, Mazur--Tate elements, congruences of modular forms, Iwasawa invariants}
\subjclass[2020]{Primary: 11R23; Secondary: 11S40, 11F33}
\begin{document}
\begin{abstract} Let $p\in\{3,5,7\}$ and let $\Delta$ denote the weight twelve modular form arising from Ramanujan's tau function. We show that $\Delta$ is congruent to an Eisenstein series $E_{k,\chi, \psi}$ modulo $p$ for explicit choices of $k$ and  Dirichlet characters $\chi$ and $\psi$. We then prove formulae describing the Iwasawa invariants of the Mazur--Tate elements attached to $\Delta$, confirming numerical data gathered by the authors in a previous work.
\end{abstract}

\maketitle

\section{Introduction}\label{sec:intro}
Let $\Delta \in \mathcal{S}_{12}(\text{SL}_2(\ZZ))$ be the unique normalized cuspidal modular form of weight twelve and level one given by
$$\Delta(z) = \sum_{n \geq 1} \tau(n) q^n, $$
where $q = e^{2 \pi i z}$ for some complex variable $z \in \CC$ in the upper half-plane and $\tau: \NN \to \ZZ$ is Ramanujan's tau function defined by the identity $$\sum_{n \geq 1} \tau(n) q^n = q \prod_{n \geq 1}(1-q^n)^{24}.$$

Given a prime $p$, the Mazur--Tate elements attached to $\Delta$ over $\QQ(\mu_{p^\infty})$ defined in \cite{MT} interpolate complex $L$-values twisted by Dirichlet characters of $p$-power conductor and encode important arithmetic information on $\Delta$. After an appropriate normalization, they give rise to elements of quotients of the Iwasawa algebra. One particularly important arithmetic invariant of these elements is the Iwasawa $\lambda$-invariant, which gives the number of zeros of these elements in the $p$-adic open unit disk (see Definition~\ref{def:mu-lambda}).

In the case where $\Delta$ is $p$-ordinary, these invariants are related to the $p$-adic $L$-function of $\Delta$.
When $p\in\{3,5,7\}$, $\Delta$ is non-ordinary at $p$ and it is not clear (to the authors at least) whether the $\lambda$-invariants are related to a $p$-adic $L$-function. In \cite[Theorems~1 and 2]{PW}, Pollack--Weston showed that the $\lambda$-invariants of the Mazur--Tate elements at a non-ordinary prime $p$ often satisfy a regular pattern under appropriate hypotheses.  Although results in \textit{loc. cit.} do not apply to $\Delta$ at  $p\in\{3,5,7\}$,  numerical calculations carried out in \cite{doyon-lei} suggest that the $\lambda$-invariants do satisfy a very regular pattern. The purpose of this article is to study these Iwasawa invariants by relating $\Delta$ to Eisenstein series modulo $p$. The first result that we prove is:

\begin{lthm}\label{thm:cong-intro}
    There is a full congruence of Fourier coefficients between $\Delta$ and a weight two Eisenstein series $E_p$ modulo $p$, where $E_3 = E_{2, \omega_3, \omega_3}$, $E_5 = E_{2,\omega_5^2,\mathbbm{1}}$ and $E_7 = E_{2, \omega_7^4, \mathbbm{1}}$  are defined in Theorem~\ref{thm:eis series} (here, $\omega_p$ denotes the Teichmüller character of conductor $p$).
\end{lthm}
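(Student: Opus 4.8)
The plan is to reduce the assertion to the classical mod-$p$ congruences for Ramanujan's $\tau$ at the exceptional primes $3,5,7$, and then to an explicit identification of $q$-expansions. Write $E_p=E_{2,\omega_p^{j_p},\mathbbm{1}}$, so $(j_3,j_5,j_7)=(2,2,4)$. Both $\Delta$ and $E_p$ are normalized Hecke eigenforms --- $\Delta\in\mathcal{S}_{12}(\SL_2(\ZZ))$, and $E_p$ of weight two, level $p$ and nebentypus $\omega_p^{j_p}$ (see Definition~\ref{def:eis series}) --- so their Fourier coefficients $\tau(n)$ and $a_n(E_p)$ are multiplicative, satisfy $\tau(1)=a_1(E_p)=1$, and obey the recursions
\[
\tau(\ell^{m+1})=\tau(\ell)\,\tau(\ell^m)-\ell^{11}\,\tau(\ell^{m-1}),\qquad a_{\ell^{m+1}}(E_p)=a_\ell(E_p)\,a_{\ell^m}(E_p)-\omega_p^{j_p}(\ell)\,\ell\,a_{\ell^{m-1}}(E_p)
\]
for all primes $\ell$ and $m\ge1$; here the factor $\omega_p^{j_p}(\ell)\,\ell=\omega_p^{j_p}(\ell)\,\ell^{\,2-1}$ is the one attached to a weight-two form of nebentypus $\omega_p^{j_p}$, and it vanishes when $\ell=p$. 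Since $11\equiv j_p+1\pmod{p-1}$ in each of the three cases, one has $\ell^{11}\equiv\omega_p^{j_p}(\ell)\,\ell\pmod p$ for every prime $\ell$ (both sides being $\equiv0$ when $\ell=p$), so the two recursions agree modulo $p$. By multiplicativity it therefore suffices to prove that $\tau(\ell)\equiv a_\ell(E_p)\pmod p$ for every prime $\ell$.

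Summing over the two divisors of $\ell$ gives $a_\ell(E_p)=\ell+\omega_p^{j_p}(\ell)$, which is $\equiv0\pmod p$ if $\ell=p$ and $\equiv\ell+\ell^{\,j_p}\pmod p$ otherwise; since $\tau(3)=2^2\cdot3^2\cdot7$, $\tau(5)=2\cdot3\cdot5\cdot7\cdot23$ and $\tau(7)=-2^3\cdot7\cdot13\cdot23$, the case $\ell=p$ is immediate, and what remains is
\[
\tau(\ell)\equiv\ell+1\pmod3,\qquad \tau(\ell)\equiv\ell+\ell^{2}\pmod5,\qquad \tau(\ell)\equiv\ell+\ell^{4}\pmod7\qquad(\ell\neq p).
\]
These are precisely the classical congruences for $\tau$ modulo $3$, $5$ and $7$ (in Galois-theoretic terms, the reducibility $\overline{\rho}_{\Delta,p}\cong\omega_p\oplus\omega_p^{j_p}$ of the residual representation), and I would cite them from the literature (Ramanujan, Wilton, Bambah--Chowla, Swinnerton-Dyer). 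Alternatively one proves them from scratch by a Sturm-type comparison: using that $\overline{E_p}$ has, after absorbing its nebentypus into the weight via the standard relation for mod-$p$ modular forms, the same $q$-expansion as a trivial-nebentypus mod-$p$ form of weight $\equiv 12\pmod{p-1}$, one multiplies by a suitable monomial in the level-one Eisenstein series $E_4,E_6$ (namely $E_4E_6$, $E_4^2$, $E_6$ for $p=3,5,7$, each $\equiv1\pmod p$) so that $\overline{E_p}$ lands in the same space of weight-twelve mod-$p$ forms on $\Gamma_0(p)$ as $\overline{\Delta}$, and then checks the required congruence on the finitely many coefficients below Sturm's bound $\tfrac{12}{12}\,[\SL_2(\ZZ):\Gamma_0(p)]=p+1$.

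The step deserving the most care is the Euler factor at $p$, and in particular the meaning of $E_3$: since $\omega_3^2$ is trivial on $(\ZZ/3)^\times$, the series $E_{2,\omega_3^2,\mathbbm{1}}$ must be taken (per Definition~\ref{def:eis series}) to be the one attached to the trivial Dirichlet character \emph{of modulus $3$}, i.e. the $3$-depleted stabilization of the quasi-modular $E_2$, whose $n$th coefficient is $\sum_{d\mid n,\ 3\nmid(n/d)}d$; this is exactly what makes $a_3(E_3)\equiv0\equiv\tau(3)\pmod3$, whereas the naive level-one choice already fails at $n=3$ (there $\tau(3)\equiv0$ but $\sigma_1(3)\equiv1\pmod3$). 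Relatedly, one cannot simply raise $E_p$ to weight twelve using powers of $E_{p-1}$ when $p=5,7$, because $12\not\equiv2\pmod{p-1}$; it is the nebentypus $\omega_p^{j_p}$, absorbed into the weight, that removes this obstruction. Finally, as $\Delta$ is cuspidal while $E_p$ has a non-zero constant term for $p=3$, the congruence is claimed only for the Fourier coefficients indexed by $n\ge1$.
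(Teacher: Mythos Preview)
Your argument is correct and follows essentially the same route as the paper's proof of the more general Theorem~\ref{thm:cong}: both reduce to the classical congruences $\tau(\ell)\equiv \ell+\ell^{j_p}\pmod p$ at primes (cited from the literature) and then propagate to all $n$ via multiplicativity and the Hecke recursion. You are somewhat more explicit than the paper in checking that the two recursions actually agree modulo $p$ (via $\ell^{11}\equiv \omega_p^{j_p}(\ell)\,\ell\pmod p$), and your caution about the meaning of $E_3$ and its constant term when $\omega_3^2$ is trivial is well taken---the paper's proof implicitly treats $\omega_3^2$ as a character of modulus $3$ so that the constant term vanishes.
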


Theorem~\ref{thm:cong-intro} is a special case of Theorem~\ref{thm:cong}, where we prove that $\Delta$ is in fact congruent to an explicit infinite family of Eisenstein series.
Although multiplicity one modulo $p$ results such as \cite[Theorem~3.11]{bel-pol} do not apply in the current setting (see Remark~\ref{rk:mult-one-mod-p}), the congruences exhibited in Theorem~\ref{thm:cong-intro} suggest that there exists a link between the modular symbol $\varphi_\Delta^+$ of sign $+1$ associated with $\Delta$ and the boundary modular symbol $\phi_p$ associated to the Eisenstein series $E_p$ modulo $p$. One advantage of working with modular symbols attached to an Eisenstein series is that they can be described explicitly, allowing us to prove:

\begin{lthm}\label{thm:lambda57-Eisen-intro}
    Let $p \in \{5,7\}$. Let $\theta_{n,p}$ denote the Mazur--Tate element of level $n$ (see Definition~\ref{def:MT}) attached to the Eisenstein series $E_p$. Then, the Iwasawa invariants of these elements (see Definition~\ref{def:mu-lambda}) are given by $$\mu(\theta_{n,p}) =0,\quad\lambda(\theta_{n,p}) = p^n - 1.$$
\end{lthm}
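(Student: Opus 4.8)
The plan is to compute the Mazur–Tate elements $\theta_{n,p}$ explicitly using the boundary modular symbol $\phi_p$ attached to $E_p$, and then read off the $\lambda$-invariant directly from the resulting distribution on $(\ZZ/p^{n+1}\ZZ)^\times$ (or the appropriate Galois group of level $n$). First I would recall that for an Eisenstein series the associated modular symbol is the boundary symbol, whose values on paths $\{a/p^n\}-\{\infty\}$ (or the relevant divisor) are given by partial sums of the form $\sum \psi(\cdot)$ twisted against the character data $(\omega_p^2,\mathbbm 1)$ for $p=5$ and $(\omega_p^4,\mathbbm 1)$ for $p=7$ — concretely, up to normalization $\theta_{n,p}$ should be expressible as a sum of group-algebra elements $\sum_{a} c_a [a]$ with $c_a$ given in terms of values of the relevant Dirichlet character and a first-Bernoulli-type quantity. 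The key point is that, unlike the cuspidal case, these coefficients are completely explicit, so the $\lambda$-invariant becomes the order of vanishing of an explicit polynomial (the image of $\theta_{n,p}$ under a fixed identification of the group algebra $\ZZ_p[(\ZZ/p^n)]$ with $\ZZ_p[T]/((1+T)^{p^n}-1)$).

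Second, I would carry out the $p$-adic valuation computation. The claim $\lambda(\theta_{n,p}) = p^n-1$ says that $\theta_{n,p}$, viewed in $\ZZ_p[T]/((1+T)^{p^n}-1)$ after projecting to the relevant eigenspace/component, has Newton polygon whose slope-zero part has length $p^n-1$ — i.e. exactly one coefficient (the "top" one, or equivalently one of the $\omega$-components) is a $p$-adic unit after dividing by the appropriate power of $p$ dictated by the $\mu$-invariant, while the rest contribute. Here I expect to use the explicit formula for $\phi_p$ together with a Bernoulli-distribution/Stickelberger-type identity: the total mass of the distribution and the individual character sums governing $\theta_{n,p}$ are generalized Bernoulli numbers $B_{1,\chi}$ (or partial sums thereof), whose $p$-adic valuations are controlled by classical results (Carlitz, or the theory of $p$-adic $L$-functions of Dirichlet characters). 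Since $\omega_p^2$ (for $p=5$) and $\omega_p^4$ (for $p=7$) are even, nontrivial, and of the right order, $B_{1,\omega_p^{-k}}$-type quantities are $p$-adic units (this is where the specific primes matter — $p-1 = 4$ or $6$, and $k=2$ or $4$ are the relevant exponents making $\omega_p^k \neq \mathbbm 1$), which forces the slope-zero length to be maximal, namely $\varphi(p^n)\cdot(\text{something}) = p^n - 1$. I would organize this as: (i) write $\theta_{n,p}$ as an explicit element of the group algebra via $\phi_p$; (ii) compute its image in each $\omega$-isotypic component; (iii) show exactly one component has $\mu = 0$ and within it the Newton polygon is a single segment of horizontal length $p^{n-1}$ (or the components assemble to give total $\lambda = p^n-1$); (iv) invoke the weight-two / conductor considerations to pin down which component survives.

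The main obstacle I anticipate is step (iii): controlling the Newton polygon of the explicit group-algebra element, not just its total mass. Knowing that certain Bernoulli sums are units gives $\mu=0$ and a lower bound on $\lambda$, but proving $\lambda$ is *exactly* $p^n-1$ requires showing the distribution is, in a suitable sense, "as non-constant as possible" on each residue-disk refinement — equivalently that the difference operator applied to $\theta_{n,p}$ does not lose too much $p$-divisibility. I would handle this by an inductive comparison between level $n$ and level $n-1$ (a distribution-relation / norm-compatibility argument relating $\theta_{n,p}$ to $\theta_{n-1,p}$), reducing the exact computation of $\lambda(\theta_{n,p})$ to the base case $n=1$ (or $n=0$), which is a finite character-sum check that can be done by hand for $p=5$ and $p=7$. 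A secondary subtlety is matching normalizations between the modular-symbol-theoretic definition of $\theta_{n,p}$ in Definition~\ref{def:MT} and the Bernoulli-distribution formulas, so that the congruence class data from Theorem~\ref{thm:cong-intro} is not needed here — Theorem B is purely about $E_p$ — but the same explicit description will later feed into the application to $\Delta$.
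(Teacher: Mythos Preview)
Your proposal heads in a workable direction but is considerably more elaborate than the paper's argument, and the step you flag as the main obstacle (your step~(iii)) is precisely what the paper sidesteps with a single elementary observation. The paper uses no Bernoulli numbers, no Newton polygons, no $\omega$-isotypic decomposition, and no induction on $n$. Instead it proves a general criterion (Theorem~\ref{thm:lambda inv}): for any weight-$0$ boundary symbol $\phi \in \BSymb_{\Gamma_1(p)}(\mathcal{V}_0(\ZZ/p\ZZ))$, if the single level-one sum $\sum_{a \in (\ZZ/p\ZZ)^\times}\big(\phi(\{\infty\})(1) - \phi(\{a/p\})(1)\big)$ is a unit mod~$p$, then $\lambda(\theta_{n,\phi}) = p^n-1$ for every $n$. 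Theorem~B then follows by evaluating this sum for $\phi_5$ and $\phi_7$ directly from Definition~\ref{def:bsymb} (one gets $3 \bmod 5$ and $5 \bmod 7$). The key lemma you are missing is the cusp invariance (Lemma~\ref{lem:cusps}): since $\phi$ is $\Gamma_1(p)$-invariant, the value $\phi(\{a/p^{n+1}\})(1)$ depends only on the $\Gamma_1(p)$-cusp of $a/p^{n+1}$, hence only on $a \bmod p$. Thus the coefficients $C_{n,a}$ are constant along the fibers of $(\ZZ/p^{n+1}\ZZ)^\times \to (\ZZ/p\ZZ)^\times$, and modulo~$p$ the Mazur--Tate element collapses to
\[
\theta_{n,\phi} \equiv \Big(\sum_{b \in (\ZZ/p\ZZ)^\times} C_{0,b}\Big)\sum_{j=0}^{p^n-1}(1+T)^j = \Big(\sum_b C_{0,b}\Big)\cdot\frac{(1+T)^{p^n}-1}{T} \equiv \Big(\sum_b C_{0,b}\Big)\cdot T^{p^n-1}.
\]
This one-line computation replaces your entire inductive scheme; the only arithmetic input is the level-one check, which is your ``base case'' but with no inductive step required.

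A secondary comment: your proposed resolution of step~(iii) via norm-compatibility is underspecified as written. Knowing $\lambda(\theta_{n-1,\phi}) = p^{n-1}-1$ together with a norm relation does not by itself pin down $\lambda(\theta_{n,\phi})$ among the possible values compatible with that projection; you would still need control on the part of $\theta_{n,\phi}$ lying in the kernel of the norm map. The cusp-invariance argument avoids this issue entirely by computing $\theta_{n,\phi} \bmod p$ in closed form for all $n$ simultaneously.
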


Theorem~\ref{thm:lambda57-Eisen-intro} is a special case of Theorem~\ref{thm:lambda inv}, where we give a simple sufficient condition for an Eisenstein series to satisfy the formula stated above. 

The computations in \cite{doyon-lei} show that the $\lambda$-invariants of the Mazur--Tate elements attached to $\Delta$ at $p\in\{5,7\}$ satisfy exactly the same formula given in Theorem~\ref{thm:lambda57-Eisen-intro} for small $n$. We prove that this formula, in fact, holds for all $n$. As we have already alluded to above,  multiplicity one modulo $p$ results are not available in our current setting. Instead, we shall take a computational approach. Another theoretical issue we face is that the Fontaine--Laffaille condition  (which requires that the weight of the modular form to be smaller than the prime $p$) does not hold in our current setting. Consequently, the canonical period of Vatsal \cite{vatsal}, which behaves well under congruences, is not available. We overcome this by considering cohomological periods instead, following the approach taken in \cite{PW}. 

We explicitly compute the modular symbols attached to $\Delta$ (denoted $\vp_\Delta^+$ in the main text) using the methods developed in \cite[\S~8]{stein} and \cite{bel-pol}, which are implemented in SageMath.  We then establish an explicit congruence modulo $p$ between $\varphi_{\Delta}^+$ and a certain boundary symbol arising from the Eisenstein series given in Theorem~\ref{thm:lambda57-Eisen-intro}; see Theorem~\ref{thm:congmodsymb} for a precise statement. Combining this congruence with Theorem~\ref{thm:lambda57-Eisen-intro} allows us to compute the Iwasawa invariants associated with $\Delta$.

\begin{lthm}[{Corollary~\ref{cor:p=5,7}}]\label{thm:lambda-delta-57}
    Fix $p \in \{5,7\}$ and let $\theta_{n,\Delta}$ denote the level $n$ Mazur--Tate element attached to $\Delta$. Then $$\mu (\theta_{n,\Delta}) =0,\quad\lambda (\theta_{n,\Delta}) = p^n-1.$$
\end{lthm}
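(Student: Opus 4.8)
The plan is to obtain the corollary by transporting the $\lambda$-invariant across a congruence modulo $p$, taking Theorem~\ref{thm:lambda57-Eisen-intro} and Theorem~\ref{thm:congmodsymb} as the only substantive inputs; the second equality $\lambda(\theta_{n,p})=p^n-1$ is exactly Theorem~\ref{thm:lambda57-Eisen-intro}, so all that really remains is the first equality.

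First I would unwind Definition~\ref{def:MT}: the Mazur--Tate element $\theta_{n,\Delta}\in\Zp[G_n]$ is produced from the sign-$+1$ modular symbol $\varphi_\Delta^+$ by a fixed $\Zp$-linear rule --- a finite sum, over residues $a$, of values of $\varphi_\Delta^+$ on the paths $\{a/p^n\}\to\{\infty\}$ weighted by the associated elements $\sigma_a\in G_n$ --- and $\theta_{n,p}$ is the \emph{same} rule applied to the Eisenstein boundary symbol $\phi_p$. Hence the congruence $\varphi_\Delta^+\equiv\phi_p\pmod p$ of Theorem~\ref{thm:congmodsymb} (with the normalisation of $\varphi_\Delta^+$ fixed there; note $\varphi_\Delta^+$ is canonical only up to a $p$-adic unit, which alters neither $\mu$ nor $\lambda$) propagates to
\begin{equation*}
\theta_{n,\Delta}\equiv\theta_{n,p}\pmod{p\,\Zp[G_n]}\qquad\text{for all }n\geq 1 .
\end{equation*}
No induction on $n$ is needed here: the congruence of modular symbols is a single statement, and each $\theta_{n,\Delta}$ is a finite combination of its values.

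Next I would combine this with Theorem~\ref{thm:lambda57-Eisen-intro}. That theorem gives $\lambda(\theta_{n,p})=p^n-1$; moreover the explicit description of $\phi_p$ used in its proof exhibits a coefficient of $\theta_{n,p}$ that is a $p$-adic unit, so $\mu(\theta_{n,p})=0$, i.e.\ $\theta_{n,p}\not\equiv 0\pmod p$. Recall from Definition~\ref{def:mu-lambda} that, after expressing an element of $\Zp[G_n]$ in the standard presentation obtained by sending a fixed generator to $1+X$, the invariant $\mu$ is the least $p$-adic valuation among the coefficients and $\lambda$ is the position of the first coefficient attaining it; when $\mu=0$ this position equals the order of vanishing at $X=0$ of the reduction in $\Fp[G_n]$ (recall $(1+X)^{p^n}-1=X^{p^n}$ in characteristic $p$), and hence depends only on the class modulo $p$. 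Applying this to $\theta_{n,\Delta}$ and $\theta_{n,p}$: since $\overline{\theta_{n,\Delta}}=\overline{\theta_{n,p}}\neq 0$, we conclude $\mu(\theta_{n,\Delta})=0$ and
\begin{equation*}
\lambda(\theta_{n,\Delta})=\lambda(\theta_{n,p})=p^n-1 ,
\end{equation*}
as claimed.

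The whole weight of the corollary therefore rests on its two inputs, and I expect the transport argument above to be routine while these are not. Theorem~\ref{thm:lambda57-Eisen-intro} needs an explicit enough formula for the boundary symbol $\phi_p$ to control the $p$-adic valuations of \emph{all} coefficients of $\theta_{n,p}$; this is what pins the value at $p^n-1$ rather than something smaller, and what delivers $\mu=0$. The real obstacle is Theorem~\ref{thm:congmodsymb}: since multiplicity one modulo $p$ is not available here (Remark~\ref{rk:mult-one-mod-p}), the Fourier-coefficient congruence of Theorem~\ref{thm:cong-intro} does not by itself force a congruence of modular symbols, so one must match Hecke eigenvalues modulo $p$ and then locate $\varphi_\Delta^+$ inside the (possibly higher-dimensional) mod-$p$ generalised eigenspace by the explicit computation in the relevant space of modular symbols carried out in SageMath following \cite{stein} and \cite{bel-pol} --- after the bookkeeping that allows the weight-twelve symbol $\varphi_\Delta^+$ and the weight-two Eisenstein datum $E_p$ to be compared modulo $p$.
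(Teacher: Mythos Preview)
Your argument is correct and follows the same route as the paper: the congruence of modular symbols from Theorem~\ref{thm:congmodsymb} propagates to a congruence $\theta_{n,\Delta}\equiv c_p\,\theta_{n,\phi_p}\pmod{p}$ of Mazur--Tate elements (note the nonzero constant $c_p\in(\ZZ/p\ZZ)^\times$, which you absorb into the normalisation of $\varphi_\Delta^+$ but which the paper keeps explicit), and since Theorem~\ref{thm:lambda57-Eisen-intro} gives $\mu(\theta_{n,\phi_p})=0$ and $\lambda(\theta_{n,\phi_p})=p^n-1$, the same holds for $\theta_{n,\Delta}$. Your write-up spells out more carefully than the paper why a mod-$p$ congruence transfers the $\lambda$-invariant (namely, because $\mu=0$ forces the invariants to be read off from the common reduction in $\Fp[\cG_n]$), which is a welcome clarification.
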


The case $p = 3$ needs to be treated differently, since the $3$-adic $\mu$-invariant of the Mazur--Tate elements attached to $\Delta$ is strictly positive (the authors have mistakenly claimed that these $\mu$-invariants were zero in \cite[\S4]{doyon-lei}, which we rectify here). To compute $\lambda(\theta_{n,\Delta})$, we establish an explicit congruence between $\varphi_{\Delta}^+$ and a boundary symbol of level $\Gamma_1(27)$ (see \S~\ref{ssec:p=3}). Methods similar to those employed in the proof of Theorem~\ref{thm:lambda-delta-57} then allow us to prove:

\begin{lthm}[{Theorem~\ref{thm:3}}]\label{thm-intro:3}
    Let $p = 3$ and $n \geq 1$ be an integer. We have$$\mu(\theta_{n,\Delta}) = 3,\quad\lambda(\theta_{n,\Delta}) = p^n  - 2.$$
\end{lthm}

The code used to perform the numerical calculations for this article is available on \cite{link1}.

\subsection*{Acknowledgement}
The authors thank Rob Pollack for answering many of their questions during the preparation of the article. The authors are also indebted to the anonymous referees for helpful comments and suggestions on earlier versions of the article. This article forms part of the master thesis of the first named author at Universit\'e Laval, who was supported by an NSERC and a FRQNT graduate scholarships. The research of the second named author is supported by the NSERC Discovery Grants Program RGPIN-2020-04259 and RGPAS-2020-00096. 

\section{Eisenstein series and boundary modular symbols}\label{sec:eis}
The goal of this section is to review the definition of boundary modular symbols and to define such symbols corresponding to Eisenstein series. We closely follow the exposition of \cite[\S~1-2]{bel-das}.

Let GL$_2(\QQ)$ act on the left on $\mathbb{P}^1(\QQ)$ by linear fractional transformations. Denote by $\dDelta$ and $\dDelta^0$ the group of divisors of $\mathbb{P}^1(\QQ)$ and its subgroup of divisors of degree $0$, respectively. Let $\Gamma$ be the congruence subgroup $\Gamma_0(M)$ or $\Gamma_1(M)$ for some integer $M \geq 1$. There is a natural action of $\Gamma$ on $\dDelta$ and $\dDelta^0$. If $V$ is an abelian group endowed with a right $\Gamma$-action,  we define a right $\Gamma$-action on Hom$(\Delta^0, V)$ by: $$\varphi\mid_{\gamma} (D) = \varphi (\gamma \cdot D)\mid_{ \gamma} \quad \text{for } \gamma \in \Gamma, \varphi \in \text{Hom}(\dDelta^0, V).$$

\begin{defn}\label{defn:modsymbBD}
    A $V$-valued modular symbol on $\mathbb{P}^1(\QQ)$ for $\Gamma$ is a $\Gamma$-invariant homomorphism from $\dDelta^0$ to $V$, i.e. an element of $\Hom_{\Gamma}(\dDelta^0, V)$. We denote the group of $V$-valued \textbf{modular symbols} by $$\Symb_{\Gamma}(V) = \Hom_{\Gamma}(\dDelta^0, V).$$
\end{defn}

We now review basic notions of Hecke operators. Fix a submonoid $S \subseteq \text{GL}_2(\QQ)$ containing $\Gamma$ and let $W$ be a right $S$-module. For each $s \in S$, $\Gamma$ acts on the double coset $\Gamma s \Gamma$ by multiplication on the left. Therefore, we have a partition of $\Gamma s \Gamma$ into orbits $$\Gamma s \Gamma = \bigsqcup_{i=1}^n \Gamma s_i, \quad$$ for some $n \geq 1$ and some fixed representatives $s_i \in S$.

\begin{remark}
    The decomposition of $\Gamma s \Gamma$ into such orbits is finite (see, for example, \cite[\S~5.1]{DS}).
\end{remark}

\begin{defn}
    For each double coset $\Gamma s \Gamma$, there is a morphism
\begin{align*}
    [\Gamma s \Gamma]: W^\Gamma & \to W^\Gamma\\
    w & \mapsto w\mid_{[\Gamma s \Gamma]} = \sum_{i=1}^n w\mid_{ s_i},
\end{align*}
to which we will refer as a \textbf{Hecke operator}.
\end{defn}

\begin{remark}
    The morphism above is well-defined and independent of the choice of representatives $s_i$ (see, for example, \cite[Exercise~5.1.3]{DS}).
\end{remark}

In the cases we are interested in, $W$ will always be a $K$-vector space, where $K$ is a field of characteristic $0$ and the $S$-action on $K$ will be $K$-linear as in \cite{bel-das}. In particular, the Hecke operators will also be $K$-linear. 

The following Hecke operators will be of particular interest to us.

\begin{defn}
    If $\Gamma_1(N) \subseteq \Gamma \subseteq \Gamma_0(N)$ for some integer $N > 0$, we set: \begin{align*}
        T_\ell & = \left[ \Gamma \begin{pmatrix} 1 & 0 \\ 0 & \ell
        \end{pmatrix} \Gamma \right] \quad \text{if } \ell \nmid N,\\
        U_\ell & = \left[ \Gamma \begin{pmatrix} 1 & 0 \\ 0 & \ell
        \end{pmatrix} \Gamma \right] \quad \text{if } \ell \mid N,\\
        \langle a \rangle & = [\Gamma \gamma_a \Gamma]
    \end{align*}
    where $\gamma_a \in \Gamma_0(N)$ is any matrix whose upper-left entry is congruent to $a$ modulo $N$.
\end{defn}

Denote by $W_{\text{univ}}$ the $\QQ$-vector space of maps $S \to \QQ$ endowed with its natural right $S$-action. Define the Hecke algebra $\mathcal{H}(S, \Gamma)$ to be the $\QQ$-algebra generated by all Hecke operators acting on $W_{\text{univ}}$. Therefore, we can endow any right $S$-module $W$ with an action of $\mathcal{H}(S,\Gamma)$ if we let $[\Gamma s \Gamma]_{W_{\text{univ}}}$ act via $[\Gamma s \Gamma]_{W}$.

We now move on to the construction of boundary symbols, following \cite[\S~2.6]{bel-das}. There is a tautological short exact sequence of abelian groups \begin{equation}\label{eqn:ses}
    0 \to \dDelta^0 \to \dDelta \to \ZZ \to 0.
\end{equation} Since this sequence splits, we can form the following exact sequence of $S$-modules $$0 \to V \to \text{Hom}(\dDelta, V) \to  \text{Hom}(\dDelta^0, V) \to 0$$ by taking the $\text{Hom}(-,V)$ functor of (\ref{eqn:ses}). On taking $\Gamma$-cohomology, we obtain the following exact sequence of $\mathcal{H}(S,\Gamma)$-modules: \begin{equation}\label{eqn:longcohom}
    0 \xrightarrow{} V^\Gamma \xrightarrow{} \text{Hom}_{\Gamma}(\dDelta,V) \xrightarrow{b} \text{Symb}_\Gamma(V) \xrightarrow{h} {H}^1(\Gamma,V).
\end{equation}
Here, we have identified $\text{Symb}_\Gamma(\mathcal{V}_k(L))$ with $ H^1_c(\Gamma, \mathcal{V}_k(L))$ (see \cite[Proposition~4.2]{ash-ste}).

\begin{defn}
    The map $b$ in \eqref{eqn:longcohom} is called the \textbf{boundary map} and its image, denoted by $\BSymb_\Gamma (V)$, is called the module of \textbf{boundary modular symbols} (or simply \textbf{boundary symbols}).
\end{defn}

The exact sequence (\ref{eqn:longcohom}) yields an isomorphism of $\mathcal{H}(S, \Gamma)$-modules $$\text{BSymb}_\Gamma(V) \cong \text{Hom}_{\Gamma} (\dDelta, V)/ V^\Gamma,$$
relating modular symbols to boundary symbols.
Furthermore, there is a short exact sequence of $\mathcal{H}(S, \Gamma)$-modules $$0 \to \text{BSymb}_\Gamma(V) \to \text{Symb}_\Gamma(V) \to H^1(\Gamma, V).$$

We now introduce a notation that will be used throughout the article. For a fixed integer $k \geq 0$ and a number field $L$, we denote by $\mathcal{M}_{k+2}(\Gamma, L)$ the set of all modular forms of weight $k+2$ for the congruence subgroup $\Gamma$ with Fourier coefficients in $L$. Similarly, we denote the associated subsets of cusp forms and Eisenstein series of $\mathcal{M}_{k+2}(\Gamma, L)$ by $\mathcal{S}_{k+2}(\Gamma, L)$ and $\mathcal{E}_{k+2}(\Gamma, L)$ respectively.

As in \cite[\S~2.5]{bel-das}, we denote by $\mathcal{P}_k(L)$ the space of polynomials with coefficients in $L$ in the variable $z$ with degree at most $k$. We endow $\mathcal{P}_k(L)$ with a left GL$_2(\QQ)$-action given by 
$$(\gamma \cdot P)(z) = (a-cz)^k P\left(\frac{dz-b}{a-cz}\right), \quad \text{where } \quad \gamma = \begin{pmatrix} a & b\\ c & d \end{pmatrix} \in \text{GL}_2(\QQ).$$ 
This way, $\mathcal{V}_k(L) = \text{Hom}_L(\mathcal{P}_k(L), L)$ is equipped with a natural right GL$_2(\QQ)$-action
$$f\mid_\gamma (P) = f(\gamma \cdot P) \quad \text{for } \gamma \in \text{GL}_2(\QQ),\text{ } P \in \mathcal{P}_k(L).$$

We recall the following result, which is essentially due to Eichler and Shimura.

\begin{theorem}\label{thm:eic-shi}
Assume that the congruence subgroup $\Gamma$ satisfies $\Gamma_1(M) \subseteq \Gamma \subseteq \Gamma_0(M)$ for some integer $M$. Then, after possibly replacing $L$ by a suitable finite extension, the following holds:
\begin{enumerate}
    \item There is a short exact sequence of $\mathcal{H}(\GL_2^+(\QQ), \Gamma)$-modules 
    $$0 \to \BSymb_\Gamma(\mathcal{V}_k(L)) \to \Symb_\Gamma(\mathcal{V}_k(L)) \stackrel{h}\to H^1(\Gamma, \mathcal{V}_k(L)) \to \mathcal{E}_{k+2}(\Gamma, L) \to 0. $$ \smallskip
    \item There is an isomorphism of $\mathcal{H}(\GL_2^+(\QQ), \Gamma)$-modules $$\Im(h) \cong \mathcal{S}_{k+2}(\Gamma,L)^2.$$\smallskip
    \item There is an isomorphism of $\mathcal{H}(\GL_2^+(\QQ), \Gamma)$-modules $$\BSymb_\Gamma(\mathcal{V}_k(L)) \cong \mathcal{E}_{k+2}(\Gamma, L)$$ compatible with the Hecke operators $T_\ell$ for $\ell \nmid M$, $U_\ell$ for $\ell \mid M$ and $\langle a \rangle$ for $a \in (\ZZ / M \ZZ)^\times$.
\end{enumerate}
\end{theorem}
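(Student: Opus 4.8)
The plan is to reduce everything to the classical Eichler–Shimura isomorphism together with the long exact sequence \eqref{eqn:longcohom}. Recall the standard statement: for a congruence subgroup $\Gamma$ with $\Gamma_1(M) \subseteq \Gamma \subseteq \Gamma_0(M)$, there is a Hecke-equivariant isomorphism
$$H^1_{\mathrm{par}}(\Gamma, \mathcal{V}_k(\CC)) \cong \mathcal{S}_{k+2}(\Gamma,\CC) \oplus \overline{\mathcal{S}_{k+2}(\Gamma,\CC)},$$
and a Hecke-equivariant isomorphism $H^1(\Gamma,\mathcal{V}_k(\CC))/H^1_{\mathrm{par}}(\Gamma,\mathcal{V}_k(\CC)) \cong \mathcal{E}_{k+2}(\Gamma,\CC)$ (the Eisenstein quotient), together with the fact that the parabolic cohomology is exactly the image of compactly supported cohomology in ordinary cohomology. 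So first I would set up the identification $\Symb_\Gamma(\mathcal{V}_k(L)) = H^1_c(\Gamma,\mathcal{V}_k(L))$ via \cite[Proposition~4.2]{ash-ste}, under which the map $h$ of \eqref{eqn:longcohom} becomes the natural map $H^1_c \to H^1$; then $\Im(h) = H^1_{\mathrm{par}}(\Gamma,\mathcal{V}_k(L))$ by definition of parabolic cohomology. Part (2) is then immediate from the classical statement (working over $\CC$, or over a finite extension $L$ large enough that all the relevant newforms and their Hecke eigenvalues are defined, which is the reason for the phrase "after possibly replacing $L$"). Part (1) is obtained by splicing: the tail of \eqref{eqn:longcohom} gives $0 \to \BSymb_\Gamma(\mathcal{V}_k(L)) \to \Symb_\Gamma(\mathcal{V}_k(L)) \xrightarrow{h} H^1(\Gamma,\mathcal{V}_k(L))$, and the cokernel of $h$ is the Eisenstein quotient $\mathcal{E}_{k+2}(\Gamma,L)$ by the classical result; concatenating $\ker$ and $\mathrm{coker}$ into one four-term sequence gives exactly the displayed sequence in (1). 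All maps here are morphisms of $\mathcal{H}(\GL_2^+(\QQ),\Gamma)$-modules because \eqref{eqn:longcohom} is, and because the Eichler–Shimura maps are Hecke-equivariant.

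For part (3), I would combine two facts. From \eqref{eqn:longcohom} and the surrounding discussion we already have the Hecke-equivariant isomorphism $\BSymb_\Gamma(\mathcal{V}_k(L)) \cong \Hom_\Gamma(\dDelta, \mathcal{V}_k(L))/\mathcal{V}_k(L)^\Gamma$. What remains is to identify this quotient with $\mathcal{E}_{k+2}(\Gamma,L)$ compatibly with $T_\ell$, $U_\ell$ and $\langle a\rangle$. The cleanest route is to note that, by part (1), $\BSymb_\Gamma(\mathcal{V}_k(L))$ is the kernel of $h$, hence sits in the exact sequence $0 \to \BSymb_\Gamma(\mathcal{V}_k(L)) \to \Symb_\Gamma(\mathcal{V}_k(L)) \to \Im(h) \to 0$; comparing Hecke characteristic polynomials on $\Symb_\Gamma$ (which, via Eichler–Shimura for $H^1_c$, sees $\mathcal{S}_{k+2}^2$ twice plus the Eisenstein part — more precisely $\Symb = H^1_c$ surjects onto $H^1$ with the boundary as kernel) forces the Hecke module structure on $\BSymb_\Gamma(\mathcal{V}_k(L))$ to match that of $\mathcal{E}_{k+2}(\Gamma,L)$. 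Alternatively, and more explicitly, one constructs the isomorphism directly: the boundary $\dDelta$ is a permutation module on the cusps $\Gamma\backslash\mathbb{P}^1(\QQ)$, so $\Hom_\Gamma(\dDelta,\mathcal{V}_k(L))$ is a direct sum over cusps of fixed-point spaces $\mathcal{V}_k(L)^{\Gamma_c}$ of stabilizers, and this is well known to be dual to the space of Eisenstein series via the constant-term-at-cusps map; one then checks the action of $T_\ell$, $U_\ell$, $\langle a\rangle$ on both sides agrees by a direct coset computation.

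The main obstacle is part (3), specifically pinning down the Hecke-compatibility of the identification $\BSymb_\Gamma(\mathcal{V}_k(L)) \cong \mathcal{E}_{k+2}(\Gamma,L)$ rather than merely an abstract isomorphism of $L$-vector spaces. The dimension count (number of cusps of $\Gamma$ versus $\dim \mathcal{E}_{k+2}(\Gamma,L)$, with the correction $-1$ coming from $\mathcal{V}_k(L)^\Gamma$, which is nonzero precisely when $k=0$) is standard, but tracking how the double-coset operators $[\Gamma s \Gamma]$ act on divisors supported on cusps and matching this against the classical Hecke action on Eisenstein series (e.g. via $q$-expansions at the various cusps) requires care with the normalization of the weight-$k$ action on $\mathcal{P}_k(L)$ and with the choice of coset representatives $s_i$. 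I would handle this by reducing to the newform/oldform decomposition of $\mathcal{E}_{k+2}(\Gamma,L)$ and checking the claim on each Hecke eigenspace, where it becomes a comparison of a single eigenvalue $a_\ell$ on each side; since the excerpt only claims compatibility with $T_\ell$ ($\ell\nmid M$), $U_\ell$ ($\ell \mid M$) and the diamond operators, this eigenspace-by-eigenspace verification suffices. Everything else — parts (1) and (2), and the $L$-rationality statements — follows formally from \eqref{eqn:longcohom}, \cite[Proposition~4.2]{ash-ste}, and the classical Eichler–Shimura theorem.
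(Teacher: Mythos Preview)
The paper does not give a proof: it simply cites \cite[Proposition~2.5]{bel-das} for a survey and \cite[\S 8]{Shi} for the original argument. Your sketch is essentially the content of those references, so in that sense it matches what the paper defers to: parts (1) and (2) really do follow by combining the identification $\Symb_\Gamma(\mathcal{V}_k(L)) \cong H^1_c(\Gamma,\mathcal{V}_k(L))$ with the long exact sequence \eqref{eqn:longcohom} and the classical Eichler--Shimura isomorphism, and part (3) is handled by the explicit description of $\Hom_\Gamma(\dDelta,\mathcal{V}_k(L))$ as a sum over cusps.

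One small caution on your first route to part (3): arguing via characteristic polynomials on $\Symb_\Gamma$ shows only that $\BSymb_\Gamma(\mathcal{V}_k(L))$ and $\mathcal{E}_{k+2}(\Gamma,L)$ have the same semisimplification as Hecke modules, not that they are isomorphic; to upgrade this you need either semisimplicity of the Hecke action on boundary symbols (true in characteristic zero, but worth stating) or the direct construction you give as your alternative. The cusp-by-cusp construction is the one actually carried out in \cite{bel-das}, and it is the one that makes the compatibility with $T_\ell$, $U_\ell$, $\langle a\rangle$ transparent.
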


\begin{proof}
    See \cite[Proposition~2.5]{bel-das} for a brief discussion of the argument and \cite[\S~8]{Shi} for the original proof.
\end{proof}

Roughly speaking, Theorem~\ref{thm:eic-shi} tells us that
$$\text{Symb}_\Gamma (\mathcal{V}_k(L)) \cong \mathcal{M}_{k+2}(\Gamma, L) \oplus \mathcal{S}_{k+2}(\Gamma, L) $$
and that the contribution of the Eisenstein part $\mathcal{E}_{k+2}(\Gamma,L)\subset \mathcal{M}_{k+2}(\Gamma, L) $ to the direct sum above can be accounted for by the boundary modular symbols BSymb$_\Gamma(\mathcal{V}_k(L))$. 

We now turn our attention to the space of Eisenstein series. The following theorem can be found in \cite{miyake}; see \cite[Proposition~2.8]{bel-das}.
\begin{theorem}\label{thm:eis series}
Let $k\ge2$ and $M\ge1$ be integers. Let $\psi$ and $\chi$ be primitive Dirichlet characters of respective conductors $Q$ and $R$,  such that $QR = M$, $\psi \chi (-1) = (-1)^k$, and if $\psi = \chi = \mathbbm{1}$, assume further that $k \neq 2$. 
Let $E_{k, \psi, \chi}$ be the Fourier expansion defined by $$E_{k, \psi, \chi}(q) = c_{k,\psi,\chi,0} + \sum_{n \geq 1} c_{k,\psi,\chi,n} q^n,$$ where $$c_{k,\psi,\chi,n} = \sum_{d \mid n, d>0} \psi (n/d) \chi (d) d^{k-1}$$ for $n \geq 1$ and $$c_{k,\psi,\chi,0} = \left\{ \begin{array}{cc}
     0 \quad  & \text{if } Q >1, \\
     \frac{- B_{k, \chi}}{2k} \quad & \text{if } Q = 1.
\end{array}\right. $$ (Here, $B_{k, \chi}$ is the $k$-th generalized Bernoulli number attached to $\chi$.)
Then $E_{k, \psi, \chi}$ is a new Eisenstein series of weight $k$ and level $M$.
\end{theorem}

These Eisenstein series can be used to form an explicit basis for $\mathcal{E}_{k+2}(\Gamma_1(N),L)$, where $M|N$ (see \cite[Proposition~2.8]{bel-das}). We can describe the corresponding elements in BSymb$_{\Gamma_1(M)}(\mathcal{V}_k(L))$ explicitly.

\begin{defn}
    Let $M$ and $k \geq 2$ be positive integers. For any pair of coprime integers $(u,v)$, define $\phi_{k,u,v} \in \Hom_{\Gamma_1(M)}(\dDelta, \mathcal{V}_k(L))$ to be supported on the $\Gamma_1(M)$-orbit of the divisor $\{u/v\} \in \dDelta$ by $$\phi_{k,u,v}\left( \gamma \left( \frac{u}{v} \right)\right)(P(z)) = P \left( \gamma \left( \frac{u}{v}\right)\right) \cdot (cu+dv)^k, \quad \gamma = \begin{pmatrix}
    a & b\\
    c & d
    \end{pmatrix}
    \in \Gamma_1(M).$$
\end{defn}
We recall the following result form \cite[Proposition~2.9]{bel-das}:

\begin{proposition}\label{prop:bsymb}
Let $\psi$, $\chi$ and $k$ be as in Theorem~\ref{thm:eis series}. The boundary modular symbol $\phi_{k, \psi, \chi} \in \Hom_{\Gamma_1(M)}(\dDelta, \mathcal{V}_k)$ defined by
   $$\phi_{k, \psi, \chi} = \sum_{\substack{x \text{ }(\text{mod } Q) \\ (x,Q)=1}} \sum_{\substack{y \text{ }(\text{mod } R) \\ (y,R)=1}} \psi^{-1}(x) \chi(y) \phi_{k, x, Qy}$$
     corresponds to the Eisenstein series $E_{k+2,\psi, \chi}$ under the isomorphism given by Theorem~\ref{thm:eic-shi}(3).
\end{proposition}

\begin{remark}
    Each summand $\psi^{-1}(x) \chi(y) \phi_{k, x, Qy}$ appearing in Proposition~\ref{prop:bsymb} depends only on the $\Gamma_1(M)$-cusp of $\{x/Qy\}$, i.e. it depends only on $x$ (mod $Q$) and $y$ (mod $R$).
\end{remark}

\section{Relating \texorpdfstring{$\Delta$}{} to Eisenstein series and boundary symbols}

In \cite[\S~6, Table~5]{doyon-lei}, we obtained numerical evidence suggesting that the $\lambda$-invariants of the level $m$ Mazur--Tate elements attached to the modular form $\Delta \in \mathcal{S}_{12}(\SL_2(\ZZ))$ introduced in \S~\ref{sec:intro}  were $p^m-1$ for $p \in \{ 5,7 \}$ and $p^m-2$ for $p=3$. In this section, we present a proof of these formulae by relating the modular symbols of $\Delta$ to boundary symbols modulo $p$ (resp. $p^2$) when $p\in\{5,7\}$ (resp. $p=3$). Our congruences are established by calculations based on the methods presented in \cite[\S~2]{pol-ste} and \cite[\S~8]{stein}.

\subsection{Congruences between \texorpdfstring{$\Delta$}{} and Eisenstein series}\label{sec:modsymbdelta} 

The goal of this section is to show that the modular form $\Delta$  is congruent to a family of ordinary new Eisenstein series introduced in Theorem~\ref{thm:eis series}, of which Theorem~\ref{thm:cong-intro} in the introduction is a special case.

\begin{defn}
Let $p$ be a prime number. We write $\omega_p$ for the Teichmüller character of conductor $p$. Given integers $0\le a,b\le p-2$ and $k\ge2$ such that $(a,b,k)\ne(0,0,2)$, we write 
    \[
E_{k,a,b} = \sum_{n \geq 0} c_{k,a,b,n} q^n
    \]
    for the Eisenstein series $E_{k,\omega_p^a,\omega_p^b}$.
\end{defn}
Note that the notation $E_{k,a,b}$ implicitly depends on the choice of $p$. We shall treat the indices $a$ and $b$ as elements in $\ZZ/(p-1)\ZZ$. For example, we have $E_{k,1,0}=E_{k,1,p-1}$.

\begin{theorem}\label{thm:cong}
For $p \in \{3,5 \}$ and all integers $n \geq 0$, we have
$$\tau(n) \equiv c_{k, a, b, n} \pmod p \quad \text{if} \quad \begin{cases} 
a = 1  \text{ and } b + k \equiv 3 \pmod {p -1},\\
a = 2  \text{ and } b + k \equiv 2 \pmod { p - 1}\text{ with }(k,p)\ne(2,3). 
\end{cases}
$$
Similarly for $p=7$, we have 
$$
\tau(n) \equiv c_{k, a, b, n} \pmod 7 \quad \text{if} \quad 
\begin{cases}
a = 1  \text{ and }  b + k \equiv 5 \pmod  6,\\
a = 4  \text{ and }  b + k \equiv 2 \pmod  6.
\end{cases}
$$
\end{theorem}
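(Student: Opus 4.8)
The plan is to reduce the congruence to a statement about Hecke eigenvalues and to exploit the fact that both sides are multiplicative. First I would recall the classical congruence of Ramanujan: $\tau(n) \equiv \sigma_{11}(n) \pmod{691}$ is the famous one, but the relevant small-prime congruences are $\tau(\ell) \equiv \ell + \ell^{10} \pmod{\text{small }p}$ type statements. More precisely, the starting point is the well-known family of congruences for $\tau$ modulo $3,5,7$ (see e.g. Serre, Swinnerton-Dyer): for $p \in \{3,5,7\}$ one has $\tau(n) \equiv \sum_{d\mid n} d^{11} \pmod p$ when $\gcd(n,p)=1$, together with the statement that $\Delta$ is congruent mod $p$ to an Eisenstein series. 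Concretely, $\tau(n)\equiv \sigma_{11}(n)\pmod{p}$ for $p\in\{3,5,7\}$ (with the appropriate interpretation at $n$ divisible by $p$). The heart of the argument is then to match $\sigma_{11}(n)\bmod p$ with the Eisenstein coefficient $c_{k,\omega_p^a,\omega_p^b,n}=\sum_{d\mid n}\omega_p^a(n/d)\,\omega_p^b(d)\,d^{k-1}$.

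The key step is a coefficient-by-coefficient comparison using multiplicativity. Both $n\mapsto \tau(n)\bmod p$ and $n\mapsto c_{k,a,b,n}\bmod p$ are multiplicative, so it suffices to check the congruence on prime powers $\ell^r$. For $\ell\neq p$, I would use that $\omega_p(d)\equiv d\pmod p$ on $p$-adic units, so $c_{k,a,b,\ell^r}\equiv \sum_{j=0}^{r}\ell^{a(r-j)}\ell^{bj}\ell^{j(k-1)} = \sum_{j=0}^r \ell^{ar}\ell^{j(b+k-1-a)}\pmod p$; comparing with $\sigma_{11}(\ell^r)=\sum_{j=0}^r \ell^{11j}$ and with $\tau(\ell^r)$ via the Hecke recursion $\tau(\ell^{r+1})=\tau(\ell)\tau(\ell^r)-\ell^{11}\tau(\ell^{r-1})$, the congruence reduces to the two numerical conditions $\{a,b+k\}$ listed: e.g. for $p=5$, $a=2$, $b+k\equiv 2\pmod 4$ forces $b+k-1-a\equiv -1\equiv 11\pmod 4$ after accounting for the normalizing power, matching $\sigma_{11}$. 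The role of $p-1$ (resp. $6$) in the hypotheses is exactly that $\omega_p$ has order $p-1$, so only $k$ and $b$ modulo $p-1$ matter, and $d^{k-1}\bmod p$ depends only on $k-1\bmod p-1$ for $p\nmid d$. For $\ell=p$: since $\omega_p$ vanishes on multiples of $p$, one has $c_{k,a,b,p^r}=\sum_{d\mid p^r}\omega_p^a(p^r/d)\omega_p^b(d)d^{k-1}$; if $a>0$ the only surviving term is $d=p^r$ with $\omega_p^a(1)=1$, giving $c_{k,a,b,p^r}\equiv p^{r(k-1)}\omega_p^b(p^r)$, which is $\equiv 0\pmod p$ once $k\geq 2$ and $r\geq 1$, and this must be matched against $\tau(p^r)\bmod p$, which indeed vanishes mod $p$ for $p\in\{3,5,7\}$ by the classical congruences (equivalently $\Delta$ is in the kernel of $U_p$ mod $p$). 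Finally the constant term $n=0$: by Definition~\ref{def:eis series}, $c_{k,a,b,0}=0$ when $Q>1$, i.e. when $\omega_p^a\neq\mathbbm 1$, which holds since $1\le a\le p-1$ (or $a=4$ for $p=7$) is not $\equiv 0\pmod{p-1}$; this matches $\tau(0)=0$.

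The main obstacle I expect is handling the prime $\ell = p$ cleanly and making sure the exponent bookkeeping modulo $p-1$ is exactly right — one has to be careful that $d^{k-1}$ for $p\mid d$ is literally $0$ in $\mathbb F_p$ (not governed by the Teichmüller reduction), and that the Hecke recursion for $\tau(\ell^r)$ together with the value of $\tau(\ell)\bmod p$ (which is $\equiv \ell^{a}+\ell^{b+k-1}$ in the right normalization) propagates to all $r$ by induction. A secondary point is to invoke, or re-derive, the classical fact that $\tau(\ell)\equiv 1+\ell^{11}\pmod p$ for $p\in\{3,5,7\}$ and $\ell\neq p$ (with the level-raising shift that produces the factor $\omega_p^a$ vs. the trivial nebentype of $\Delta$); citing Serre or Swinnerton-Dyer for this congruence and then reducing everything to a finite check on $\tau(\ell)$ for a single auxiliary prime $\ell$ — or better, a formal manipulation of the generating function $q\prod(1-q^n)^{24}$ modulo $p$ — would complete the argument. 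I would present the generating-function route where feasible: modulo $p$ one can relate $\prod(1-q^n)^{24}$ to a power of $\prod(1-q^n)$ and use the Jacobi/Euler product identities to extract the Eisenstein series directly, which sidesteps the eigenvalue recursion entirely for $p=3,5$ and makes $p=7$ only mildly more involved.
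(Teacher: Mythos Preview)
Your overall strategy --- reduce to prime-indexed coefficients via multiplicativity and the Hecke recursion, then compare using $\omega_p(\ell)\equiv\ell\pmod p$ --- is exactly the route the paper takes. The paper in fact skips the separate prime-power analysis entirely: once $\tau(\ell)\equiv c_{k,a,b,\ell}\pmod p$ is established for all primes $\ell$, it simply invokes the recursion for normalized eigenforms (\cite[Proposition~5.8.5]{DS}) to propagate the congruence to every $n\ge1$, and handles $n=0$ by the observation you make about the constant term vanishing when $a\not\equiv 0\pmod{p-1}$.

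However, your concrete classical input is wrong, and this would derail the computation. You assert $\tau(\ell)\equiv 1+\ell^{11}\pmod p$ (equivalently $\tau(n)\equiv\sigma_{11}(n)\pmod p$) for $p\in\{3,5,7\}$. This fails for $p=5$ and $p=7$: for instance $\tau(2)=-24\equiv 1\pmod 5$ while $1+2^{11}=2049\equiv 4\pmod 5$, and $\tau(2)\equiv 4\pmod 7$ while $1+2^{11}\equiv 5\pmod 7$. The congruence $\tau\equiv\sigma_{11}$ is the one modulo $691$; it does not hold modulo these small primes. The correct congruences (Serre, \cite{serre-tau}) are
\[
\tau(\ell)\equiv \ell^2+\ell^9\pmod{27},\qquad \tau(\ell)\equiv \ell+\ell^{10}\pmod{25},\qquad \tau(\ell)\equiv \ell+\ell^4\pmod{7},
\]
which after Fermat give $\tau(\ell)\equiv \ell+\ell^2\pmod{3}$, $\tau(\ell)\equiv \ell+\ell^2\pmod{5}$, and $\tau(\ell)\equiv \ell+\ell^4\pmod{7}$. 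You actually allude to the $\ell+\ell^{10}$ shape in your first sentence before switching to the incorrect $\sigma_{11}$ form. With the right input, matching against $c_{k,a,b,\ell}\equiv \ell^a+\ell^{b+k-1}\pmod p$ immediately produces the stated conditions on $(a,b+k)$; your $\sigma_{11}$-based exponent bookkeeping instead introduces a spurious factor $\ell^{ar}$ that does not cancel. The generating-function detour you propose is unnecessary once the correct prime congruence is in hand.
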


\begin{proof}
Let $\ell$ be a prime number. Recall that 
\[
\tau(\ell)  \equiv\begin{cases} \ell^2 + \ell^9 & \pmod{3^3},\\
  \ell + \ell^{10}& \pmod{5^2},\\
  \ell + \ell^4 &\pmod 7,
 \end{cases}
\]
for all prime numbers $\ell$  (see for example \cite[\S~2]{serre-tau}). By Fermat's Little Theorem, it implies that
\[
    \tau(\ell) \equiv \begin{cases}
    \ell + \ell^2 & \pmod{3},\\
    \ell + \ell^2 & \pmod{5},\\
   \ell + \ell^4 &\pmod  7.
   \end{cases}
\]

By definition, for $p \in \{3,5,7\}$ and any integer $n\ge1$, we have 
\begin{align*}
c_{k,a,b,\ell} & = \omega_p^a(\ell)\omega_p^b(1)+\omega_p^a(1)\omega_p^b(\ell)\ell^{k-1}\\
& \equiv \ell^{a} +\ell^{b+k-1} \pmod p
\end{align*}
since $\omega_p(\ell) \equiv \ell$ $(\text{mod } p)$.

Therefore, our prescribed choices of $a,b$ and $k$ give $$c_{k,a,b,\ell} \equiv \tau(\ell) \pmod p$$ by Fermat's little theorem.
As $\Delta$ and $E_{k,a,b}$ are normalized Hecke eigenforms, their coefficients satisfy the same recurrence relations (see, for example, \cite[Proposition~5.8.5]{DS}). This tells us that
\[
\tau(n)\equiv c_{k,a,b,n},\quad n\ge1.
\]
Since the constant terms of both modular forms are $0$, the congruence holds for all $n\ge0$.
\end{proof}

\begin{remark}
    Theorem~\ref{thm:cong} tells us that for each $p \in \{3,5,7\}$, there is a full congruence of Fourier coefficients modulo $p$ between $\Delta$ and infinitely many Eisenstein series of the form $E_{k, a, b}$. Our proofs of Theorems~\ref{thm:lambda-delta-57} and \ref{thm-intro:3} are based on comparing the Mazur--Tate elements of $\Delta$ with the boundary symbols attached to certain Eisenstein series. Although Theorem~\ref{thm:cong} is not directly used in these proofs, it serves as a guide in our choice of Eisenstein series.
\end{remark}

\begin{remark}
    For each $p\in\{5,7\}$, we have found two choices of  $(a,b)$ for which $\tau(n)\equiv c_{k,a,b,n}\pmod p$. Each choice corresponds to a Hida family of Eisenstein series as $k$ varies. When $p=3$, there is one such family. The authors thank the referees for pointing this out to us.
\end{remark} 

\subsection{Evaluation map on modular symbols}\label{S:evaluation}
The goal of this section is to define an evaluation map on modular symbols, which will be used to establish Theorems~\ref{thm:lambda-delta-57} and \ref{thm-intro:3}  in subsequent sections. 

We begin by introducing a second point of view on modular symbols (which can be found in \cite{PW}), which is dual to that of \cite{bel-das} previously described in \S \ref{sec:eis}. Let $R$ be any commutative ring and, for any integer $k \geq 0$, let $V_k(R)$ be the space of homogeneous polynomials of degree $k$ in the variables $X$ and $Y$ with coefficients in $R$. We endow $V_k(R)$ with a right action of $\GL_2(R)$ via $$(P\mid_\gamma)(X,Y) = P(dX - cY, -bX + aY),$$ where $P \in V_k(R)$ and $\gamma = \begin{pmatrix} a & b \\ c & d \end{pmatrix} \in \GL_2(R)$.
Further, if we let $\SL_2(\ZZ)$ act on $\dDelta^0$ by linear fractional transformation, we can endow $\Hom(\dDelta^0, V_{k}(R))$ with a right action of $\SL_2(\ZZ)$ via
$$(\varphi \mid_{\gamma})(D) = (\varphi(\gamma \cdot D))\mid_{\gamma},$$
where $\varphi \in \Hom(\dDelta^0, V_{k}(R))$, $\gamma \in \SL_2(\ZZ)$ and $D \in \dDelta^0$.

\begin{defn}\label{defn:modsymb}
    Let $\Gamma \leq \SL_2(\ZZ)$ be a congruence subgroup. A \textbf{classical modular symbol} of weight $k$ and level $\Gamma$ is an element of $\Hom_{\Gamma}(\dDelta^0, V_k(R))$ for some commutative ring $R$.
\end{defn}

\begin{remark}
    This definition differs from Definition~\ref{defn:modsymbBD} as the spaces $V_k(R)$ and $\mathcal{V}_k(R)$ are duals of each other and the actions on these spaces are slightly different. This will not be an issue for our purposes because we will mainly focus on modular symbols of weight $0$ where these two definitions coincide. We chose to introduce both definitions as boundary modular symbols are generally defined as elements of the spaces in Definition~\ref{defn:modsymb} (see \cite{bel-das} for example) whilst modular symbols attached to modular forms are generally defined as elements of the spaces in Definition~\ref{defn:modsymbBD} (see \cite{PW}).
\end{remark}

Let $\Gamma \leq \SL_2(\ZZ)$ be a fixed congruence subgroup. To any modular form ${f \in \mathcal{S}_k(\Gamma)}$, one can associate a modular symbol of sign $+1$ denoted by $\varphi_f^+ \in \Hom_{\Gamma}(\dDelta^0,V_{k-2}(\CC))$ defined by 
$$\varphi_f^+(\{r\} - \{s\}) = 2 \pi i \int_s^r f(z)(zX + Y)^{k-2}dz,$$
where the integration takes place on the semi-circular path joining $r$ and $s$ contained in the upper half plane (or a vertical line if either $r$ or $s$ is $\infty$). In this article, we require a canonical way of normalizing the modular symbols so that their values on every divisor $D \in \dDelta^0$ are $p$-integral for $p \in \{3,5,7\}$ and that at least one of these values is coprime to $p$. For this purpose, we assume from now on that all modular symbols are normalized by the cohomological period of \cite[Definition~2.1]{PW}. We refer the reader to the code \cite[\texttt{mu.sagews}]{link1}.


Given a commutative ring $R$, we have the evaluation map
\begin{align*}
    \eval: V_k(R) & \to R\\
    P(X,Y) & \mapsto P(0,1).
\end{align*} It induces a projection $$\alpha: \Hom(\dDelta^0, V_k(R)) \to \Hom(\dDelta^0, R).$$
In other words, given a homomorphism (not necessarily a modular symbol) in $\Hom(\dDelta^0,V_{k}(R))$, we obtain a homomorphism in $\Hom(\dDelta^0,R)$ via the map $\eval$.

 When $R=\ZZ / N \ZZ$ for some integer $N>1$, we write $\eval_N$ for the evaluation map $\eval$ and $\alpha_N$ for the induced map $\alpha$ from $ \Hom(\dDelta^0, V_k(\ZZ/N\ZZ))$ to $\Hom(\dDelta^0, \ZZ/N\ZZ)$.
We show below that under certain hypotheses, $\alpha_N$ sends modular symbols of weight $k$ to weight $0$ modular symbols.
\begin{proposition}\label{prop:wtktowt2}
    If $\Gamma$ is a congruence subgroup containing $\Gamma_1(N)$, the restriction of $\alpha_N$ to $\Hom_{\Gamma}(\dDelta^0, V_k(\ZZ / N \ZZ))$ takes values in $\Hom_{\Gamma_1(
    N)}(\dDelta^0, \ZZ / N\ZZ)$. In other words, $\alpha_N$ induces a map $$ \Hom_{\Gamma}(\dDelta^0, V_k(\ZZ / N \ZZ)) \to \Hom_{\Gamma_1(N)}(\dDelta^0, \ZZ / N \ZZ).$$
\end{proposition}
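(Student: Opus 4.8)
The plan is to unwind the two group actions and check compatibility of the evaluation map $\eval_N$ with the relevant $\Gamma_1(N)$-actions. Fix $\varphi \in \Hom_{\Gamma}(\dDelta^0, V_k(\ZZ/N\ZZ))$; we want to show $\alpha_N(\varphi) = \eval_N \circ \varphi$ is invariant under $\Gamma_1(N)$. By definition of the $\Gamma_1(N)$-action on $\Hom(\dDelta^0, \ZZ/N\ZZ)$ (the weight-$0$ action, which is simply precomposition with $\gamma \cdot (-)$ on $\dDelta^0$, since the target carries the trivial action), it suffices to show that for every $\gamma \in \Gamma_1(N)$ and every $D \in \dDelta^0$,
$$\eval_N\big(\varphi(\gamma \cdot D)\big) = \eval_N\big((\varphi(\gamma\cdot D))\mid_\gamma\big),$$
because the right-hand side is $(\alpha_N(\varphi)\mid_\gamma)(D)$ after using the $\Gamma$-invariance $\varphi\mid_\gamma = \varphi$ in weight $k$. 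In other words, the crux is the identity $\eval_N(P) = \eval_N(P\mid_\gamma)$ for all $P \in V_k(\ZZ/N\ZZ)$ and all $\gamma \in \Gamma_1(N)$.

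First I would make this last identity explicit. Writing $\gamma = \begin{pmatrix} a & b \\ c & d \end{pmatrix} \in \Gamma_1(N)$, the right action gives $(P\mid_\gamma)(X,Y) = P(dX - cY, -bX + aY)$, so $\eval_N(P\mid_\gamma) = (P\mid_\gamma)(0,1) = P(-c, a)$. Since $\gamma \in \Gamma_1(N)$ we have $c \equiv 0$ and $a \equiv 1$ modulo $N$, hence $P(-c,a) = P(0,1) = \eval_N(P)$ in $V_k(\ZZ/N\ZZ)$, as the coefficients live in $\ZZ/N\ZZ$ and the polynomial is evaluated there. This is exactly where the reduction modulo $N$ and the congruence conditions defining $\Gamma_1(N)$ are both used; over $\ZZ$ or $\CC$ the statement would be false, which is why the proposition is phrased for $\ZZ/N\ZZ$-coefficients.

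Then I would assemble the pieces: for $\gamma \in \Gamma_1(N) \subseteq \Gamma$ and $D \in \dDelta^0$,
$$(\alpha_N(\varphi)\mid_\gamma)(D) = \eval_N\big(\varphi(\gamma\cdot D)\big)\mid_\gamma = \eval_N\big((\varphi(\gamma\cdot D))\mid_\gamma\big) = \eval_N\big((\varphi\mid_\gamma)(D)\big) = \eval_N\big(\varphi(D)\big) = \alpha_N(\varphi)(D),$$
where the second equality is the identity just proved, the third is the definition of the weight-$k$ action on $\Hom(\dDelta^0, V_k)$, and the fourth is $\Gamma$-invariance of $\varphi$. Thus $\alpha_N(\varphi) \in \Hom_{\Gamma_1(N)}(\dDelta^0, \ZZ/N\ZZ)$, which is the claim. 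I expect no serious obstacle here; the only point requiring care is keeping straight the two different module actions (weight $k$ versus weight $0$) and the fact that on the target $\ZZ/N\ZZ$ the action is trivial, so that invariance under $\Gamma_1(N)$ reduces purely to the combinatorial identity $P(-c,a) = P(0,1)$ modulo $N$.
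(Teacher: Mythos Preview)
Your proof is correct and follows essentially the same approach as the paper: both arguments reduce to the identity $\eval_N(P\mid_\gamma)=\eval_N(P)$ for $\gamma\in\Gamma_1(N)$, verified via the congruences $c\equiv 0$, $a\equiv 1\pmod N$, and then combine this with the $\Gamma$-invariance of $\varphi$. The only cosmetic difference is that the paper simplifies $P(dX-cY,-bX+aY)$ to $P(X,-bX+Y)$ before evaluating, whereas you evaluate directly to $P(-c,a)$; the content is identical.
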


\begin{proof}
    Let $\varphi \in \Hom_{\Gamma}(\dDelta^0, V_k(\ZZ / N \ZZ))$ and $\gamma \in \Gamma_1(N)$. For every $D \in \dDelta^0$, \begin{equation}\label{eqn:pfprojn}
        \varphi(D) = (\varphi \mid_{\gamma})(D) =\varphi(\gamma \cdot D) \mid_{\gamma}
    \end{equation} in $V_k(\ZZ / N \ZZ)$. One sees that for every $\gamma = \begin{pmatrix} a & b \\ c & d \end{pmatrix} \in \Gamma_1(N)$ and every polynomial $P(X,Y) \in V_k(\ZZ / N \ZZ)$, the following relation holds: \begin{align*}
        \eval_N(P(X,Y)\mid_\gamma) & \equiv \eval_N(P(dX - cY, -bX + aY)) \pmod{N}\\ & \equiv \eval_N(P(X, -bX + Y)) \pmod{N}\\ 
        & \equiv P(0,1) \equiv \eval_N(P(X,Y)) \pmod{N}.
    \end{align*} 
    Thus, applying $\eval_N$ on both sides of \eqref{eqn:pfprojn} gives
    $$(\alpha_N \varphi)(D) = (\alpha_N \varphi)(\gamma \cdot D) = (\alpha_N \varphi)\mid_{\gamma}(D),$$ which completes the proof.
\end{proof}

\subsection{Modular symbols at \texorpdfstring{$p \in\{5,7\}$}{}}\label{ssec:57}
We  specialize to the case $p \in\{ 5,7\}$. To prove Theorem~\ref{thm:lambda-delta-57}, we show that the modular symbol attached to $\Delta$ and the boundary symbol arising from one of the two weight 2 Eisenstein series in Theorem~\ref{thm:cong} are congruent modulo $p$. When $p=5$, we work with $E_{2,2,0}$, whereas $E_{2,4,0}$ is utilized for the case $p=7$. Note that these choices give Eisenstein series of level $p$. In particular, we will study the values of the aforementioned boundary symbols at divisors of the form $\{\frac{a}{p^{n+1}}\}$, where $p\nmid a$.

One may ask whether one could work with $E_{2,1,1}$ (resp. $E_{2,1,3}$) when $p=5$ (resp. $p=7$) instead. These Eisenstein series are of level $p^2$. For our purposes, the formula given by Proposition~\ref{prop:bsymb} would require us to evaluate $\phi_{0,x,py}$ at divisors of the form $\{\frac{a}{p^{n+1}}\}$, which would be $0$ when $M=p^2$. Therefore, this would not yield any information on the Iwasawa invariants of the Mazur--Tate elements of $\Delta$.

Proposition~\ref{prop:wtktowt2} tells us that ${\alpha_p {\varphi}_{\Delta}^+ \in \Hom_{\Gamma_1(p)}(\dDelta^0,\ZZ / p \ZZ)}$ is a $\ZZ / p\ZZ = V_0(\ZZ / p \ZZ)$-valued modular symbol of weight $0$ and level $\Gamma_1(p)$. 
The results of Pollack--Stevens \cite[Corollary~2.7, Corollary~2.10]{pol-ste} and Stein \cite[Theorem~8.4]{stein} tell us that every modular symbol in $\Hom_{\Gamma_1(p)}(\dDelta^0, \ZZ / p \ZZ)$ is completely determined by its values on a finite subset $S_p \subseteq \dDelta^0$. In particular, two symbols in $\Hom_{\Gamma_1(p)}(\dDelta^0,\ZZ / p \ZZ)$ that agree on every element of $S_p$ are equal. This allows us to prove:

\begin{theorem}\label{thm:congmodsymb}
    Let $p\in \{5,7\}$. If we denote the reduction modulo $p$ of the symbol $\varphi$ by $\overline{\varphi}$, then there exists non-zero constants $c_p \in \ZZ / p \ZZ$ such that $$\alpha_5 \overline{\varphi}_\Delta^+(\{r\} - \{s\}) \equiv c_5 ( \overline{\phi_{0,\omega_5^2, \mathbbm{1}}}(\{r\})(P) - \overline{\phi_{0,\omega_5^2, \mathbbm{1}}}(\{s\})(P)) \pmod{5},$$ $$\alpha_7 \overline{\varphi}_\Delta^+(\{r\} - \{s\}) \equiv c_7 ( \overline{\phi_{0,\omega_7^4, \mathbbm{1}}}(\{r\})(P') - \overline{\phi_{0,\omega_7^4, \mathbbm{1}}}(\{s\})(P')) \pmod{7}$$ for all divisors $\{r\}, \{s\} \in \dDelta$ and where $P \in \ZZ / 5 \ZZ = \mathcal{P}_0(\ZZ / 5 \ZZ )$ and $P' \in \ZZ / 7 \ZZ = \mathcal{P}_0(\ZZ / 7 \ZZ)$.
\end{theorem}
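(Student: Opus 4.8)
The plan is to reduce the two asserted congruences to a finite computation and then carry it out. After restricting the boundary symbol $\phi_{0,\omega_p^a,\mathbbm{1}}$ from $\dDelta$ to $\dDelta^0$, both sides of each congruence are elements of $\Hom_{\Gamma_1(p)}(\dDelta^0,\ZZ/p\ZZ)$: the left-hand side by Proposition~\ref{prop:wtktowt2} (applied with $N=p$ and $\Gamma=\SL_2(\ZZ)\supseteq\Gamma_1(p)$), and the right-hand side because $\phi_{0,\omega_p^a,\mathbbm{1}}\in\Hom_{\Gamma_1(p)}(\dDelta,\mathcal{V}_0(\ZZ/p\ZZ))$ restricts to an element of $\BSymb_{\Gamma_1(p)}(\mathcal{V}_0(\ZZ/p\ZZ))\subseteq\Symb_{\Gamma_1(p)}(\mathcal{V}_0(\ZZ/p\ZZ))$ and $\mathcal{V}_0(\ZZ/p\ZZ)\cong\ZZ/p\ZZ$. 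By the structure theory recalled above (Pollack--Stevens~\cite[Cor.~2.7, Cor.~2.10]{pol-ste} and Stein~\cite[Thm.~8.4]{stein}), any such symbol is determined by its restriction to the finite set $S_p$, so it suffices to exhibit a single nonzero $c_p\in\ZZ/p\ZZ$ for which the two sides agree on every element of $S_p$.

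The right-hand side is completely explicit. Specialising Definition~\ref{def:bsymb} to weight $k=0$, $\psi=\omega_p^a$ of conductor $Q=p$ and $\chi=\mathbbm{1}$ of conductor $R=1$, the generators satisfy $\phi_{0,u,v}(\gamma\{u/v\})(P)=P$ for $\gamma\in\Gamma_1(p)$ and vanish off the $\Gamma_1(p)$-orbit of $\{u/v\}$; since $\overline{\phi_{0,\omega_p^a,\mathbbm{1}}}(\{r\})(P)$ is $\ZZ/p\ZZ$-linear in $P$, it is enough to take $P=1$, and then this value is the sum of $\omega_p^{-a}(x)$ over those $x\in(\ZZ/p\ZZ)^\times$ with $\{x/p\}$ representing the same $\Gamma_1(p)$-cusp as $r$ --- a finite, purely combinatorial computation on cusps. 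For the left-hand side, we realise $\varphi_\Delta^+\in\Hom_{\SL_2(\ZZ)}(\dDelta^0,V_{10}(\CC))$ as the sign-$+1$ Hecke eigensymbol with $T_\ell$-eigenvalues $\tau(\ell)$, normalised by the cohomological period of~\cite[Def.~2.1]{PW} so that its values are $p$-integral, reduce it modulo $p$ to get $\ovp_\Delta^+$, and post-compose with $\eval_p$ to get $\alpha_p\ovp_\Delta^+$. Its values on $S_p$ are computed with the Manin-symbol algorithms of~\cite[\S~2]{pol-ste} and~\cite[\S~8]{stein} as implemented in SageMath. Comparing the two lists of values on $S_p$ then exhibits constants $c_5,c_7\in\ZZ/p\ZZ$ with the required property; in particular they are nonzero, which completes the proof.

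The reason one expects such congruences is the Hecke-eigenvalue congruence $\Delta\equiv E_p\pmod p$ of Theorem~\ref{thm:cong}: $\alpha_p\ovp_\Delta^+$ is an eigenvector for $U_p$, for $T_\ell$ with $\ell\neq p$, and for the diamond operators, with the same mod-$p$ eigensystem as $E_p$, and, by the Hecke-equivariant identification $\BSymb_{\Gamma_1(p)}(\mathcal{V}_0)\cong\mathcal{E}_2(\Gamma_1(p))$ of Theorem~\ref{thm:eic-shi}, so is the boundary symbol $\phi_{0,\omega_p^a,\mathbbm{1}}$. \textbf{The main obstacle is precisely the failure of multiplicity one modulo $p$} (Remark~\ref{rk:mult-one-mod-p}): a priori $\alpha_p\ovp_\Delta^+$ could lie off the boundary-symbol line inside a larger generalised eigenspace modulo $p$, so proportionality cannot be deduced abstractly and must be verified by checking that one and the same constant $c_p$ works simultaneously on all of $S_p$; moreover one must check $c_p\neq 0$, which amounts to $\ovp_\Delta^+$ surviving reduction modulo $p$ (the vanishing of the $\mu$-invariant for $p\in\{5,7\}$, in contrast with the case $p=3$). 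A secondary point demanding care is the translation between the two conventions for modular symbols used in the paper --- $\mathcal{V}_k(L)$ with its left $\GL_2(\QQ)$-action, underlying the boundary symbols, versus $V_k(R)$ with its right $\GL_2(R)$-action, underlying $\varphi_\Delta^+$, together with the induced actions on $\Hom(\dDelta^0,-)$; these agree in weight $0$, which is exactly why the comparison is made after applying $\alpha_p$.
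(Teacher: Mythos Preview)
Your proposal is correct and follows essentially the same approach as the paper's proof: reduce to a finite check on a generating set $S_p$ via the structure theory of \cite{pol-ste,stein}, compute the boundary symbols explicitly from Definition~\ref{def:bsymb}, and verify the proportionality in SageMath. The paper's version differs only in being more explicit, writing out $S_5$ and $S_7$, the cusp-by-cusp values of the boundary symbols, the values of $\alpha_p\ovp_\Delta^+$ on each generator, and the resulting constants $c_5=2$, $c_7=1$.
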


\begin{proof}
    The calculations in the following are based on the methods discussed in \cite{pol-ste} and \cite[\S8]{stein}, which are carried out on SageMath. The code of our calculations can be found in \cite[\texttt{lambda.sagews}]{link1}. 
    
    First, one computes that a modular symbol in $\Hom_{\Gamma_1(5)}(\dDelta^0, \ZZ / 5 \ZZ)$ is completely determined by its values on $$S_5 = \left\{ \left\{ \frac{-2}{5} \right\} - \left\{ \frac{-1}{3}\right\}, \left\{ \frac{1}{5} \right\} - \left\{ \frac{1}{4}\right\}, \left\{ \frac{-1}{3}\right\} - \left\{ \frac{-1}{4} \right\} \right\}.$$ Similarly, a modular symbol in $\Hom_{\Gamma_1(7)}(\dDelta^0, \ZZ / 7 \ZZ)$ is completely determined by its values on \begin{align*}
        S_7 = & \left\{ \left\{ \frac{-2}{7}\right\} - \left\{ \frac{-1}{4} \right\}, \left\{  \frac{-3}{7}\right\} - \left\{ \frac{-2}{5} \right\}, \left\{ \frac{1}{7} \right\} - \left\{ \frac{1}{6} \right\}, \right.\\
        & \left. \left\{ \frac{4}{25}\right\} - \left\{ \frac{1}{6}\right\}, \left\{ \frac{-1}{5} \right\} - \left\{ \frac{-1}{6} \right\} \right\}.
    \end{align*} The code that generates these sets has already been implemented in SageMath following the methods of \cite[\S~8]{stein}. One can verify numerically that the congruences in the statement of the theorem hold for all divisors in $S_p$ and find explicitly the constants $c_5 = 2$ and $c_7 = 1$.
    Note that the reduction modulo $p$ of the relevant boundary symbols can be calculated directly using Proposition~\ref{prop:bsymb}: \begin{equation*}
        \overline{\phi_{0,\omega_5^2,\mathbbm{1}}}(\{r\})(P) = P \cdot \begin{cases} 2, & \text{if } \{r\} \in \Gamma_1(5)\{\infty\},\\
        3, & \text{if } \{r\} \in \Gamma_1(5)\left\{\frac{2}{5}\right\},\\
        0, & \text{otherwise,}
        \end{cases}
    \end{equation*}
    
    \begin{equation*}
        \overline{\phi_{0,\omega_7^4, \mathbbm{1}}}(\{r\})(P') = P' \cdot \begin{cases}
        2, & \text{if } \{r\} \in \Gamma_1(7)\{\infty\},\\
        1, & \text{if } \{r\} \in \Gamma_1(7)\left\{ \frac{2}{7} \right\},\\
        4, & \text{if } \{r\} \in \Gamma_1(7)\left\{ \frac{3}{7} \right\},\\
        0, & \text{otherwise.}
        \end{cases}
    \end{equation*}
    Our calculations on SageMath tell us that 
 \begin{align*}
\alpha_5 {\varphi}_\Delta^+\left(\left\{ \frac{-2}{5}\right\} - \left\{ \frac{-1}{3}\right\}\right) &\equiv 1 \pmod{5}, \\
        \alpha_5{\varphi}_\Delta^+\left(\left\{ \frac{1}{5}\right\} - \left\{ \frac{1}{4}\right\}\right) &\equiv 4 \pmod{5}, \\
    \alpha_5{\varphi_\Delta^+}\left(\left\{ \frac{-1}{3}\right\} - \left\{ \frac{-1}{4}\right\}\right) &\equiv 0 \pmod{5}, \\
    \alpha_7 {\varphi_\Delta^+}\left(\left\{ \frac{-2}{7}\right\} - \left\{ \frac{-1}{4}\right\}\right) &\equiv 1 \pmod{7}, \\
    \alpha_7 {\varphi_\Delta^+}\left(\left\{ \frac{-3}{7}\right\} - \left\{ \frac{-2}{5}\right\}\right) &\equiv 4 \pmod{7}, \\
    \alpha_7 {\varphi_\Delta^+}\left(\left\{ \frac{1}{7}\right\} - \left\{ \frac{1}{6}\right\}\right) &\equiv 2 \pmod{7}, \\
    \alpha_7 {\varphi_\Delta^+}\left(\left\{ \frac{4}{25}\right\} - \left\{ \frac{1}{6}\right\}\right) &\equiv \alpha_7 {\varphi_\Delta^+}\left(\left\{ \frac{-1}{5}\right\} - \left\{ \frac{-1}{6}\right\}\right) \equiv 0 \pmod{7},
\end{align*}
  yielding the desired result.
\end{proof}

\begin{remark}\label{rk:mult-one-mod-p}
    With an appropriate multiplicity one modulo $p$  result such as \cite[Theorem~3.11]{bel-pol}, one could potentially obtain a more direct proof for Theorems~\ref{thm:congmodsymb}. It would be sufficient to show that the Hecke eigenvalues of the  boundary symbols and  $\alpha_p{\varphi}_\Delta^+$ are congruent up to a fixed non-zero constant. 
    
    Note that \textit{loc. cit.}  allows us to compare modular forms of the same weight whose levels are of the form $Np$, where $p\nmid N$. The weight 12 Eisenstein series obtained from Theorem~\ref{thm:cong} have level $p^2$. Therefore, we may not apply  \cite[Theorem~3.11]{bel-pol} directly to compare the modular symbols of $\Delta$ and the corresponding weight 12 Eisenstein series. 
    
    One may be tempted to work with $\alpha_p{\varphi}_\Delta^+$ and the weight 2 Eisenstein series in Theorem~\ref{thm:cong}. While these Eisenstein series do have level $p$ when $p\in\{5,7\}$, the hypothesis (Good Eisen) in \cite[\S 3.1]{bel-pol} only applies to maximal ideals corresponding to weight 2 Eisenstein series of the form $E_{2,\mathbb{1},\psi}$, where $\psi$ is a Dirichlet character of conductor prime to $p$ (see the proof of Lemma 3.1 in \textit{op. cit.}). The weight 2 Eisenstein series in Theorem~\ref{thm:cong} are visibly not of this form, so \cite[Theorem~3.11]{bel-pol} does not apply.
    \end{remark}

\subsection{Modular symbols at \texorpdfstring{$p = 3$}{}}\label{ssec:p=3}

The case $p=3$ requires different considerations from the cases $p \in\{ 5,7\}$. Under the normalization discussed in \S\ref{S:evaluation}, we find that $\alpha(\varphi_\Delta^+)$ sends the elements of $\dDelta^0$ to either $3^2\ZZ_3$ or $3^3\ZZ_3$. Furthermore, the relevant values used in the computation of $\lambda$-invariants, namely $\alpha{\varphi}_\Delta^+(\{\infty\} - \{a / 3^n \})$ (see Definition~\ref{def:MT} below) belong to $3^3\ZZ_3$. This tells us that the $3$-adic $\mu$-invariants are at least $3$.

It follows from Proposition~\ref{prop:wtktowt2} that  $\alpha_{81}\varphi_\Delta^+$ is an element of $\Hom_{\Gamma_1(81)}(\dDelta^0,\ZZ/81\ZZ)$. Our discussion in the previous paragraph says that we have, in fact 
\begin{equation}
    \alpha_{81}{\varphi_\Delta^+}\in\Hom_{\Gamma_1(81)}(\dDelta^0,3^2\ZZ/81\ZZ)\cong \Hom_{\Gamma_1(81)}(\dDelta^0,\ZZ/9\ZZ).\label{eq:iso-81}
\end{equation}
We write $\beta_9 {\varphi}_{\Delta}^+\in \Hom_{\Gamma_1(81)}(\dDelta^0,\ZZ/9\ZZ) $ for the image of $\alpha_{81}\varphi_\Delta^+$ under \eqref{eq:iso-81}.

We have found by brute force that $\beta_9 {\varphi}_{\Delta}^+$ in fact belongs to the subgroup  $$\Hom_{\Gamma_1(27)}(\dDelta^0, \ZZ / 9 \ZZ)\subseteq \Hom_{\Gamma_1(81)}(\dDelta^0,\ZZ/9\ZZ).$$ More explicitly, we define $\phi_9 \in \Hom_{\Gamma_1(27)}(\dDelta, \ZZ / 9 \ZZ)$ by assigning to $D\in \dDelta$ the values\begin{equation*}
 \begin{cases} 0, & \text{if } D \in \Gamma_1(27)\left\{ \frac{1}{9} \right\} \bigcup \Gamma_1(27)\left\{ \frac{8}{27} \right\} \bigcup \Gamma_1(27)\left\{ \frac{10}{27} \right\} \bigcup \Gamma_1(27)\left\{ \frac{8}{9} \right\} \bigcup \Gamma_1(27)\left\{ \infty \right\}, \\
    1, & \text{if } D \in \Gamma_1(27)\left\{ 0 \right\} \bigcup \Gamma_1(27)\left\{ \frac{1}{12} \right\} \bigcup \Gamma_1(27)\left\{ \frac{1}{10} \right\} \bigcup \Gamma_1(27)\left\{ \frac{1}{8} \right\} \bigcup \Gamma_1(27)\left\{ \frac{5}{12} \right\}, \\
    3, & \text{if } D \in \Gamma_1(27)\left\{ \frac{2}{27} \right\} \bigcup \Gamma_1(27)\left\{ \frac{2}{9} \right\} \bigcup \Gamma_1(27)\left\{ \frac{7}{27} \right\} \bigcup \Gamma_1(27)\left\{ \frac{11}{27} \right\} \bigcup \Gamma_1(27)\left\{ \frac{7}{9} \right\},\\
    4, & \text{if } D \in \Gamma_1(27)\left\{ \frac{1}{11} \right\} \bigcup \Gamma_1(27)\left\{ \frac{1}{7} \right\} \bigcup \Gamma_1(27)\left\{ \frac{1}{3} \right\} \bigcup \Gamma_1(27)\left\{ \frac{1}{2} \right\} \bigcup \Gamma_1(27)\left\{ \frac{2}{3} \right\},\\
    6, & \text{if } D \in \Gamma_1(27)\left\{ \frac{4}{27} \right\} \bigcup \Gamma_1(27)\left\{ \frac{5}{27} \right\} \bigcup \Gamma_1(27)\left\{ \frac{4}{9} \right\} \bigcup \Gamma_1(27)\left\{ \frac{13}{27} \right\} \bigcup \Gamma_1(27)\left\{ \frac{5}{9} \right\},\\
    7, & \text{if } D \in \Gamma_1(27)\left\{ \frac{1}{13} \right\} \bigcup \Gamma_1(27)\left\{ \frac{1}{6} \right\} \bigcup \Gamma_1(27)\left\{ \frac{1}{5} \right\} \bigcup \Gamma_1(27)\left\{ \frac{1}{4} \right\} \bigcup \Gamma_1(27)\left\{ \frac{5}{6} \right\}.
    \end{cases}
\end{equation*}
Note that this is indeed a boundary symbol since it is invariant under $\Gamma_1(27)$ by construction.
It should correspond to the reduction modulo 9 of a linear combination of certain Eisenstein series of level $27$. However, an explicit description is not required for our purposes. 

We prove the following analogue of Theorem~\ref{thm:congmodsymb}. 

\begin{theorem}\label{thm:congmodsymb3}
  The congruence 
  $$\beta_9\varphi_\Delta^+(\{r\} - \{s\}) \equiv \phi_9(\{r\})(P) - \phi_9(\{s\})(P) \pmod{9}$$ holds for all divisors $\{r\}, \{s\} \in \Div(\mathbb{P}^1(\QQ))$ and $P \in \ZZ / 9 \ZZ = \mathcal{P}_0(\ZZ / 9 \ZZ)$.
\end{theorem}

\begin{proof}
    This follows from the same argument as in the proof of Theorem~\ref{thm:congmodsymb}. The numerical computations carried out can be found in \cite[\texttt{lambda.sagews}]{link1}. With SageMath and the methods of \cite{pol-ste} and \cite{stein}, one can explicitly compute a set of divisors $S_{27} \subseteq \Delta^0$ with $\# S_{27} = 55$ such that any element of $\Hom_{\Gamma_1(27)}(\dDelta^0, \ZZ / 9 \ZZ)$ is completely determined by its image on $S_{27}$. Since both $\beta_9{\varphi}_{\Delta}^+$ and the image of $\phi_9$ by the boundary map belong to $\Hom_{\Gamma_1(27)}(\dDelta^0, \ZZ / 9 \ZZ)$, the claim follows from verifying that these symbols agree on all elements of $S_{27}$ (see Table~\ref{tab:long}).
\end{proof}

\section{Formulae of Iwasawa \texorpdfstring{$\lambda$}{}-invariants for \texorpdfstring{$\Delta$}{}}
We  first recall the definition of Mazur--Tate elements given in \cite[\S~2.1]{PW}. Then, using the congruence of modular symbols we have obtained in \S~\ref{ssec:57} and \S~\ref{ssec:p=3}, we explicitly compute the $p$-adic $\lambda$-invariants of the Mazur--Tate elements attached to $\Delta$ at $p \in \{ 3,5,7 \}$. 

As in \cite[\S~3.1]{doyon-lei}, for $p$ an odd prime, let $G_n = \Gal(\QQ(\mu_{p^n}) / \QQ)$. We identify an element $a \in (\ZZ / p^n \ZZ)^\times$ with the unique element $\sigma_a \in G_n$ such that $\sigma_a(\zeta) = \zeta^a$ for all $\zeta \in \mu_{p^n}$. We denote by $K_n$ the unique extension of $\QQ$ contained in $\QQ(\mu_{p^{n+1}})$ of degree $[K_n:\QQ] = p^n$. We write $\mathcal{G}_n = \Gal(K_n / \QQ)$, which we can identify with a quotient of $G_{n+1}$ via the natural projection map $\pi_n: G_{n+1} \twoheadrightarrow \mathcal{G}_n$. 

Following \cite[\S~2.1]{PW}, we define Mazur--Tate elements attached to a  modular symbol  as follows.

\begin{defn}\label{def:MT}
Fix integers $n, k\ge 0$, a congruence subgroup $\Gamma \leq \SL_2(\ZZ)$ and a commutative ring $R$. Let $\varphi \in \Hom_{\Gamma}(\dDelta^0, V_k(R))$ be a modular symbol. We define  $$\Theta_{n,\varphi} = \sum_{a \in (\ZZ / p^{n+1} \ZZ)^\times}  \varphi (\{\infty\} - \{a/p^{n+1}\}) \big |_{(X,Y) = (0,1)} \cdot \sigma_a \in \CC[G_{n+1}]$$
and denote the image of $\Theta_{n,\varphi}$ in $\CC[\cG_n]$ under the natural norm map induced by $\pi_n$ by $\tilde\Theta_{n,\varphi}$.

The Mazur--Tate element of level $n$ attached to $\varphi$ is defined to be
\[
\theta_{n,\varphi}=\frac{\tilde\Theta_{n,\varphi}}{\Omega_\varphi},
\]
where $\Omega_\varphi$ is the cohomological period for $\varphi$ given in \cite[Definition~2.1]{PW}. 

If $f \in \mathcal{S}_k(\Gamma)$ is a fixed modular form, we denote $\theta_{n, \varphi_{f}^+}$ by $\theta_{n,f}$.
\end{defn}

\begin{remark}
 In \cite[Definition~2.1]{PW},  two periods $\Omega_\varphi^+$ and $\Omega_\varphi^-$ are utilized, depending on whether the modular symbol lies inside the $+1$ or $-1$ eigenspace of the involution. In the present article, we have projected the modular symbols being considered from the group ring of $G_{n+1}$ to that of $\cG_n$. Consequently, we obtain modular symbols in the $+1$ eigenspace. For simplicity, we have omitted the superscript $+$ from our notation.
\end{remark}

\begin{remark}
    Let $R$ be any commutative ring, and $\phi \in \BSymb_\Gamma(R) =$ \linebreak $\Hom_{\Gamma}(\dDelta, \mathcal{V}_0(R))$ be a weight $0$ boundary symbol for some congruence subgroup $\Gamma$. One can extend the definition of level $n$ Mazur--Tate elements to such a symbol in the following way. After fixing an element $P \in R$, the function \begin{align*}
        \Phi_P: \dDelta & \to R\\
        \{r\} & \mapsto \phi(\{r\})(P)
    \end{align*}
    is a classical modular symbol in the sense of \cite[\S~2.1]{PW} after restricting it to $\dDelta^0$, i.e. $\Phi_P \in \Hom_\Gamma(\dDelta^0, V_0(R))$. Therefore, we can adapt the definition of Mazur--Tate elements to $\phi$ by setting $$\Theta_{n,\phi} = \sum_{a \in (\ZZ / p^{n+1} \ZZ)^\times} (\Phi_1(\{\infty\}) - \Phi_1(\{a / p^{n+1}\})) \cdot \sigma_a \in \mathbb{C}[G_{n+1}].$$
\end{remark}

\begin{remark}
    For the case of $\Delta$, the normalization by the cohomological period forces that $\theta_{n,\Delta} \in \ZZ_p[\mathcal{G}_n]$.
\end{remark}

We now briefly review the definitions of the Iwasawa $\mu$ and $\lambda$ invariants of an element $F\in\Zp[\cG_n]$. We choose a generator $\gamma_n$ of the Galois group $\cG_n$. We may write $F$ as a polynomial $\sum_{i=0}^{p^n-1}a_iT^i$, where $T=\gamma_n-1$.

\begin{defn}\label{def:mu-lambda}
For a non-zero element $F =\sum_{i=0}^{p^n-1} a_i T^i\in \Zp[\cG_n]$, we define the $\mu$ and $\lambda$-invariants of $F$ by
\begin{align*}
    \mu(F)& = \min\limits_{i} \ord_p (a_i),\\
    \lambda (F)& = \min \{i:\ord_p (a_i) = \mu (F) \},
\end{align*}
where $\ord_p$ denotes the $p$-adic valuation on $\ZZ$. When $F=0$, we set $$\mu(F)=\lambda(F)=\infty.$$
\end{defn}
\begin{remark}
 The definitions above are independent of the choice of the generator $\gamma_n$. 
\end{remark}

Under the normalization given as in Definition~\ref{def:MT}, our computations suggest that $\mu(\theta_{n,\Delta})=1$ (resp. $0$) when $p=3$ (resp. $p\in\{5,7\}$) for all $n\ge0$. We shall prove this as part of Theorem~\ref{thm:3} (resp. Corollary~\ref{cor:p=5,7}). Our proof of Corollary~\ref{cor:p=5,7} relies on a formula for the  $\lambda$-invariants of the Mazur--Tate elements attached to weight $0$ boundary symbols of level $\Gamma_1(p)$ (Theorem~\ref{thm:lambda inv}). To prove this, we require the following preliminary lemma.

\begin{lemma}\label{lem:cusps}
    Let $p$ be an odd prime number and $n \geq 1$ be an integer. If $\phi \in \BSymb_{\Gamma_1(p)}(\mathcal{V}_0(\ZZ / p \ZZ))$, then 
    \[
\phi(\{a/p^n\})(1)=\phi(\{a/p\})(1)
    \]
    for all $a\in(\ZZ/p^n\ZZ)^\times$.
\end{lemma}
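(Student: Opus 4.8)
The plan is to reduce the statement about divisors $\{a/p^n\}$ for large $n$ to divisors $\{b/p\}$ by exploiting the $\Gamma_1(p)$-invariance of $\phi$. First I would recall that a weight-$0$ boundary symbol $\phi\in\BSymb_{\Gamma_1(p)}(\mathcal V_0(\ZZ/p\ZZ))$, evaluated at $1\in\mathcal P_0$, gives a $\ZZ/p\ZZ$-valued function on $\dDelta$ that is constant on $\Gamma_1(p)$-orbits and depends only on the cusp of $\{a/p^n\}$ in $X_1(p)$. So the whole claim amounts to: the cusp of $\{a/p^n\}$ equals the cusp of $\{\omega_p(a)/p\}$ for $\Gamma_1(p)$. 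Since $\omega_p(a)\equiv a\pmod p$, and more relevantly $\omega_p(a)$ is the Teichmüller lift so $\omega_p(a)\in(\ZZ/p\ZZ)^\times$ is just the reduction of $a$ mod $p$ (as an element of $\ZZ/p\ZZ$), the real content is that $\{a/p^n\}$ and $\{(a\bmod p)/p\}$ are $\Gamma_1(p)$-equivalent.

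The key computation is the standard description of $\Gamma_1(p)$-equivalence of cusps: two fractions $u_1/v_1$ and $u_2/v_2$ (in lowest terms, with the $v_i$ the "denominators") are $\Gamma_1(p)$-equivalent iff $v_1\equiv v_2\pmod p$ and $u_1\equiv u_2\pmod{\gcd(p,v_1)}$ — or the analogous criterion with roles of numerator/denominator, depending on convention; I would cite the cusp classification for $\Gamma_1(N)$ (e.g. via \cite{DS}). Here $\{a/p^n\}$ has denominator $p^n\equiv0\pmod p$, and I want to compare with $\{\omega_p(a)/p\}$, whose denominator $p\equiv0\pmod p$ as well. Concretely I would produce an explicit matrix: choose integers so that $\begin{pmatrix} \alpha&\beta\\\gamma&\delta\end{pmatrix}\in\Gamma_1(p)$ sends $\omega_p(a)/p$ to $a/p^n$. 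Writing $a = \omega_p(a) + p\cdot m$ for a suitable $m$ (using $\omega_p(a)\equiv a\pmod p$), one looks for $\gamma\equiv0\pmod p$, $\alpha\equiv\delta\equiv1\pmod p$ with $\tfrac{\alpha\omega_p(a)+\beta p}{\gamma\omega_p(a)+\delta p} = \tfrac{a}{p^n}$; matching denominators forces a relation among the entries that can be solved because $\gcd(\omega_p(a),p)=1$. A clean way to package this: both $\{a/p^n\}$ and $\{\omega_p(a)/p\}$ lie over the cusp of $X_0(p)$ "at $0$" (denominator divisible by $p$), and the fiber of $X_1(p)\to X_0(p)$ over that cusp is parametrized by $(\ZZ/p\ZZ)^\times/\{\pm1\}$ via the numerator mod $p$; since $a\equiv\omega_p(a)\pmod p$, the two cusps coincide. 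Given that the symbol $\phi$ is of sign... actually no sign is imposed here, so I would make sure the parametrization I use is by $(\ZZ/p\ZZ)^\times$ itself, not its quotient by $\pm1$ — the $\Gamma_1(p)$-orbit of $\{a/p^n\}$ is genuinely determined by $a\bmod p$.

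The main obstacle I anticipate is bookkeeping with conventions: the paper uses the left action of $\GL_2(\QQ)$ on $\mathbb P^1(\QQ)$ by $\gamma\cdot(u/v)$, and the definition of $\phi_{k,u,v}$ supported on a $\Gamma_1(M)$-orbit, so I need the cusp-equivalence criterion to match \emph{that} action exactly (numerator versus denominator can get swapped relative to the textbook statement). I would pin this down once, early, by checking it on a small example (say $p=5$, matching against the explicit values of $\overline{\phi_{0,\omega_5^2,\mathbbm 1}}$ listed in the proof of Theorem~\ref{thm:congmodsymb}, where $\{2/5\}$ and $\{\infty\}$ sit in distinct orbits). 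Once the cusp criterion is correctly oriented, the lemma is immediate: $\{a/p^n\}$ and $\{\omega_p(a)/p\}$ have denominators that are both $\equiv0\pmod p$ and numerators $\equiv a\pmod p$, hence are $\Gamma_1(p)$-equivalent, hence $\phi$ takes the same value on them. The hypothesis $n\ge1$ is used only to guarantee $p\mid p^n$ so that $\{a/p^n\}$ is the "denominator-divisible-by-$p$" type of cusp in the first place.
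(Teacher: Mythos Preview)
Your approach is correct and essentially identical to the paper's: both reduce the statement to the fact that $\phi$ is constant on $\Gamma_1(p)$-cusps, then invoke the standard cusp criterion (the paper cites \cite[Proposition~3.8.3]{DS}) to see that $\{a/p^n\}$ and $\{\omega_p(a)/p\}$ lie in the same cusp since $p^n\equiv p\equiv 0\pmod p$ and $a\equiv\omega_p(a)\pmod p$. Your extra discussion of constructing an explicit matrix and worrying about the $\pm1$ quotient is unnecessary once the cusp criterion is in hand, but harmless.
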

\begin{proof}
     As $\phi \in \text{BSymb}_{\Gamma_1(p)}(\mathcal{V}_0(\ZZ / p \ZZ))$, the values of $\phi(\{r\})(1) \pmod{p}$ for $\{r\} \in \dDelta$ are invariant on a cusp of $\Gamma_1(p)$. By \cite[Proposition~3.8.3]{DS}), $\{a/c\}$ and $\{a'/c'\}$ (where $a,c$ are coprime integers and similarly for $a',c'$) are in the same cusp if and only if \[\begin{pmatrix}
         a'\\c'
     \end{pmatrix}\equiv\pm\begin{pmatrix}
         a+jc\\c
     \end{pmatrix}\mod p\] for some integer $j$.
  Hence, the lemma follows.
\end{proof}

\begin{theorem}\label{thm:lambda inv}
    Let $p$ be an odd prime number and $n \geq 1$ be an integer. If $\phi \in \BSymb_{\Gamma_1(p)}(\mathcal{V}_0(\ZZ / p \ZZ))$ is a $\mathcal{V}_0(\ZZ / p \ZZ)$-valued boundary symbol, then 
    \[\theta_{n,\phi}\equiv\mathcal{C}_{n,\phi}\sum_{\sigma\in\cG_n}\sigma\pmod p,\] where
    $$\mathcal{C}_{n,\phi}=\sum_{a \in (\ZZ / p \ZZ)^\times} ( \phi(\{\infty\})(1) - \phi(\{a/p\})(1)) . $$
    In particular, if $\mathcal{C}_{n,\phi}\not\equiv 0\pmod p$,
    then $\mu(\theta_{n,\phi})  =0$ and $\lambda(\theta_{n,\phi}) = p^n - 1$. \end{theorem}
\begin{proof}
    Let $C_{n,a} = \phi(\{\infty\})(1) - \phi(\{a/p^{n+1}\})(1)$ for $a \in (\ZZ / p^{n+1}\ZZ)^\times$. Consider the following group isomorphisms 
    \[
    (\ZZ / p^{n+1}\ZZ)^\times\cong (\Zp / p^{n+1}\Zp)^\times\cong \mu_{p-1}\times\frac{1+p\Zp}{1+p^{n+1}\Zp}\times\cong (\ZZ/p\ZZ)^\times\times\frac{1+p\ZZ}{1+p^{n+1}\ZZ}.
    \]
    We write $(\hat{a},\langle a\rangle) \in (\ZZ/p\ZZ)^\times\times (1+p\ZZ)/(1+p^{n+1}\ZZ)$ for the image of $a$.
    
    Recall that $\gamma_n$ is a fixed generator of the group $\cG_n=\Gal(K_n/\QQ)$, $T=\gamma_n-1$ is an element in the group ring $\Zp[\cG_n]$ and $\sigma_a\in G_{n+1}=\Gal(\QQ(\mu_{p^{n+1}})/\QQ)$ is the element given by $\zeta\mapsto \zeta^a$. Let $\tilde\sigma_a\in \cG_n$ be the natural image of $\sigma_a$ under the projection $G_{n+1}\twoheadrightarrow \cG_n$. We define $0 \leq i_{n,a} \leq p^n-1$ to be the unique integer such that $\tilde\sigma_a= \gamma_n^{i_{n,a}} $. Then $i_{n,a}=i_{n,b}$ if and only if $\langle a\rangle =\langle b\rangle$.
    We can thus deduce the following congruences modulo $p$:
    \begin{align*}
       \theta_{n,\phi} & \equiv \sum_{a \in (\ZZ / p^{n+1}\ZZ)^\times} C_{n,a} \cdot (1 + T)^{i_{n,a}}\\
       & \equiv \sum_{\hat a \in (\ZZ / p \ZZ)^\times} \left(\sum_{\langle a\rangle  \in \frac{1+p\ZZ}{1+p^{n+1}\ZZ}} C_{n,a} \cdot (1 + T)^{i_{n,a}}\right)\\
       & \equiv \sum_{\hat a \in (\ZZ / p \ZZ)^\times} C_{0,\hat a} \sum_{j=0}^{p^n - 1} (1 + T)^j\\
       &\equiv \mathcal{C}_{n,\phi}\sum_{\sigma\in\cG_n}\sigma,
   \end{align*}
    where the third congruence follows from Lemma~\ref{lem:cusps}.
   
In particular,
\[
       \theta_{n,\phi} \equiv \mathcal{C}_{n,\phi}\frac{(1 + T)^{p^n} - 1}{T} \equiv \mathcal{C}_{n,\phi}T^{p^n-1} .
\]
Therefore, if $\mathcal{C}_{n,\phi}\not\equiv 0\pmod p$, we have $\mu(\theta_{n,\phi}) = 0$ and $\lambda(\theta_{n,\phi}) = p^n - 1$, which concludes the proof of the theorem.
\end{proof}

Theorem~\ref{thm:lambda57-Eisen-intro} follows from Theorem~\ref{thm:lambda inv} once we verify that the boundary symbols $\phi_p$  (see the description of the values of $\phi_p$ given in the proof of Theorem~\ref{thm:congmodsymb}) satisfy 
$$\sum_{a \in (\ZZ / p \ZZ)^\times} ( \phi_p(\{\infty\})(1) - \phi_p(\{a/p\})(1)) \not\equiv 0 \quad (\text{mod } p)$$ 
Indeed, we can calculate explicitly that
$$\sum_{ a \in (\ZZ / 5 \ZZ)^\times } ( \phi_{0,\omega_5^2, \mathbbm{1}}(\{\infty\})(1) - \phi_{0,\omega_5^2,\mathbbm{1}}(\{ a/5 \})(1)) \equiv 3 \not\equiv 0 \pmod{5},$$
and
    $$\sum_{ a \in (\ZZ / 7 \ZZ)^\times } ( \phi_{0,\omega_7^4, \mathbbm{1}}(\{\infty\})(1) - \phi_{0,\omega_7^4,\mathbbm{1}}(\{ a/7 \})(1)) \equiv 5 \not\equiv 0 \pmod{7},$$
as required.
We are now ready to prove Theorem~\ref{thm:lambda-delta-57}.

\begin{corollary}\label{cor:p=5,7}
    For $p \in \{5,7\}$, we have $$\mu(\theta_{n,\Delta})=0,\quad\lambda(\theta_{n,\Delta}) = p^n - 1.$$
\end{corollary}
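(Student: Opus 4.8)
The plan is to combine the explicit congruence of modular symbols from Theorem~\ref{thm:congmodsymb} with the $\lambda$-invariant formula for boundary symbols in Theorem~\ref{thm:lambda inv}. First I would observe that, by the definition of the Mazur--Tate element $\theta_{n,\Delta} = \theta_{n,\varphi_\Delta^+}$ (Definition~\ref{def:MT}) and the fact that it evaluates modular symbols at $(X,Y)=(0,1)$, the reduction of $\theta_{n,\Delta}$ modulo $p$ depends only on the values $\alpha_p\overline{\varphi}_\Delta^+(\{\infty\}-\{a/p^{n+1}\})$ for $a\in(\ZZ/p^{n+1}\ZZ)^\times$. By Theorem~\ref{thm:congmodsymb}, these values are congruent modulo $p$ to $c_p\bigl(\overline{\phi_p}(\{\infty\})(1) - \overline{\phi_p}(\{a/p^{n+1}\})(1)\bigr)$, where $\phi_p = \phi_{0,\omega_5^2,\mathbbm 1}$ for $p=5$ and $\phi_p = \phi_{0,\omega_7^4,\mathbbm 1}$ for $p=7$, and $c_p\in(\ZZ/p\ZZ)^\times$ is the nonzero constant produced there ($c_5=2$, $c_7=1$).

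The next step is to pass this pointwise congruence through the construction of the Mazur--Tate element. Since the map sending a weight-$0$ symbol to its level-$n$ Mazur--Tate element is additive (and $\ZZ/p\ZZ$-linear on the relevant $\ZZ/p\ZZ$-valued symbols), the congruence of values for all $a$ forces $\overline{\theta_{n,\Delta}} \equiv c_p\,\overline{\theta_{n,\phi_p}} \pmod p$ in $(\ZZ/p\ZZ)[\cG_n]$, where $\theta_{n,\phi_p}$ is the Mazur--Tate element attached to the boundary symbol $\phi_p$ in the sense of the remark following Definition~\ref{def:MT}. Because $c_p$ is a unit modulo $p$, this immediately gives $\mu(\theta_{n,\Delta}) = \mu(\theta_{n,\phi_p})$ and $\lambda(\theta_{n,\Delta}) = \lambda(\theta_{n,\phi_p})$ whenever the latter has $\mu$-invariant $0$; more precisely, multiplying an element of $\ZZ_p[\cG_n]$ by a $p$-adic unit changes neither invariant, so it suffices to know $\theta_{n,\phi_p}\not\equiv 0\pmod p$ and to identify its $\lambda$-invariant.

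That identification is exactly the content of Theorem~\ref{thm:lambda57-Eisen-intro} (= Theorem~\ref{thm:lambda inv} applied to $\phi_p$): one checks that the nonvanishing hypothesis
$$\sum_{a\in(\ZZ/p\ZZ)^\times}\bigl(\phi_p(\{\infty\})(1)-\phi_p(\{a/p\})(1)\bigr)\not\equiv 0\pmod p$$
holds, which was verified above to equal $3$ modulo $5$ and $5$ modulo $7$, and concludes $\lambda(\theta_{n,p}) = p^n-1$. Combining the two displays, $\lambda(\theta_{n,\Delta}) = \lambda(\theta_{n,p}) = p^n-1$, which is the assertion of Corollary~\ref{cor:p=5,7} (and of Theorem~\ref{thm:lambda-delta-57}).

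The only genuine subtlety — and the step I would write out most carefully — is the bookkeeping that the reduction of $\theta_{n,\Delta}$ modulo $p$ really is controlled entirely by the $(0,1)$-evaluations, i.e. that normalizing $\varphi_\Delta^+$ by the cohomological period makes $\theta_{n,\Delta}$ land in $\ZZ_p[\cG_n]$ (noted in the remark after Definition~\ref{def:MT}) so that reduction modulo $p$ makes sense, and that the norm map $\pi_n$ from $G_{n+1}$ to $\cG_n$ is compatible with reduction. None of this is deep, but it is where an argument could go wrong, so I would state explicitly that $\overline{\theta_{n,\Delta}}$ is the image in $(\ZZ/p\ZZ)[\cG_n]$ of $\sum_a \alpha_p\overline\varphi_\Delta^+(\{\infty\}-\{a/p^{n+1}\})\sigma_a$ under $\pi_n$, and likewise for $\phi_p$, before invoking Theorem~\ref{thm:congmodsymb} term by term.
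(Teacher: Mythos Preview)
Your proposal is correct and follows essentially the same route as the paper: deduce from Theorem~\ref{thm:congmodsymb} that $\theta_{n,\Delta}\equiv c_p\,\theta_{n,\phi_p}\pmod{p}$ with $c_p\in(\ZZ/p\ZZ)^\times$, conclude $\lambda(\theta_{n,\Delta})=\lambda(\theta_{n,\phi_p})$, and then invoke Theorem~\ref{thm:lambda57-Eisen-intro}. Your write-up is in fact more explicit than the paper's about why the reduction of $\theta_{n,\Delta}$ modulo $p$ is governed by the $(0,1)$-evaluations and about compatibility with $\pi_n$, but the argument is the same.
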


\begin{proof}
   Theorem~\ref{thm:congmodsymb} tells us that for $p \in \{5,7\}$, 
   $$\theta_{n,\Delta} \equiv c_p \theta_{n,\phi_p} \pmod{p},$$
   where $\phi_5 = \phi_{0, \omega_5^2, \mathbbm{1}}$ and $\phi_7 = \phi_{0, \omega_7^4, \mathbbm{1}}$ are boundary symbols and ${c_p \in (\ZZ / p \ZZ)^\times}$. Therefore, it follows that $$\mu(\theta_{n,\Delta}) = \mu(\theta_{n,\phi_p})=0,\quad \lambda(\theta_{n,\Delta}) = \lambda(\theta_{n,\phi_p}).$$
   Hence, the corollary follows from Theorem~\ref{thm:lambda57-Eisen-intro}.
\end{proof}

A similar calculation can be performed for the case $p=3$, proving Theorem~\ref{thm-intro:3}:

\begin{theorem}\label{thm:3}
    For every positive integer $n$, the Iwasawa invariants of the $3$-adic Mazur--Tate elements are given by 
    $$\mu(\theta_{n,\Delta}) = 3,\quad \lambda(\theta_{n,\Delta}) = 3^n - 2.$$
\end{theorem}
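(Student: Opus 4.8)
\textbf{Proof plan for Theorem~\ref{thm:3}.}

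The plan is to mimic the strategy used for $p\in\{5,7\}$ in Corollary~\ref{cor:p=5,7}, but working modulo $9$ rather than modulo $p$, and with the boundary symbol $\phi_9$ of level $\Gamma_1(27)$ in place of an Eisenstein boundary symbol of level $\Gamma_1(p)$. First I would invoke Theorem~\ref{thm:congmodsymb3}, which gives
$$\alpha_9\overline{\varphi}_\Delta^+(\{r\}-\{s\}) \equiv \phi_9(\{r\})(1) - \phi_9(\{s\})(1) \pmod 9$$
for all divisors; taking $r=\infty$, $s=a/3^{n+1}$ and summing against $\sigma_a$ as in Definition~\ref{def:MT}, this shows $\theta_{n,\Delta} \equiv \theta_{n,\phi_9} \pmod 9$, hence $\lambda(\theta_{n,\Delta}) = \lambda(\theta_{n,9})$ provided the right-hand side has $\mu$-invariant $0$ when reduced mod $3$ — which is exactly what needs checking, since (as noted in the $p=3$ discussion) the mod-$3$ values vanish and the interesting information sits in the mod-$9$ reduction, forcing $\mu=1$ and shifting the relevant valuation.

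The core computation is then the analogue of the $\theta_{n,\phi}$ calculation in the proof of Theorem~\ref{thm:lambda inv}, but now for the level-$\Gamma_1(27)$ symbol $\phi_9$. The key structural input I would establish is a cusp-stabilization lemma in the spirit of Lemma~\ref{lem:cusps}: for $a\in(\ZZ/3^{n+1}\ZZ)^\times$ with $n\ge2$, the value $\phi_9(\{a/3^{n+1}\})(1)$ depends only on the class of $a$ modulo $27$ (or perhaps modulo $27$ together with sign), because $\phi_9$ is $\Gamma_1(27)$-invariant and the cusp of $\{a/3^{n+1}\}$ for $\Gamma_1(27)$ is determined by $a \bmod 27$ and the denominator $3^{n+1}$. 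Writing $a = \omega(a)\cdot u$ with $u \equiv 1 \pmod 3$, and fixing a primitive root $\gamma$ mod $3^{n+1}$ so that $a/\omega_3(a) \equiv \gamma^{i_{n,a}}$, I would partition the sum defining $\theta_{n,\phi_9}$ over the $9$ residue classes mod $27$ that are $\equiv$ a fixed unit mod $3$ and lift to the $3^{n-1}$ choices of $i_{n,a}$ in an arithmetic progression; the inner geometric sum $\sum_j (1+T)^{3j\cdot(\text{stride})}$ collapses, leaving a factor of the form $\tfrac{(1+T)^{3^n}-1}{(1+T)^{3}-1} \equiv T^{3^n-3}\cdot(\text{unit}) \pmod 3$ or similar, times a residual sum over the mod-$27$ classes of the $\phi_9$-values. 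The arithmetic of $\tfrac{(1+T)^{3^n}-1}{(1+T)^3-1}$ modulo $3$ — which has $\lambda$-contribution $3^n-3$ — combined with a residual polynomial in $T$ of degree $\le 2$ whose relevant coefficient I compute from the explicit table of $\phi_9$-values, should produce total $\lambda$-invariant $3^n-2$. I would then verify by direct substitution of the tabulated values of $\phi_9$ that the residual degree-$\le2$ factor contributes a further $T^1$ with a mod-$3$ unit coefficient (and $\mu=1$ overall, consistent with the positive $\mu$-invariant), giving $\lambda(\theta_{n,\phi_9}) = (3^n-3) + 1 = 3^n - 2$.

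The main obstacle I anticipate is getting the bookkeeping of the geometric-series collapse right at level $27$ rather than level $3$: the passage from Lemma~\ref{lem:cusps} to its level-$27$ analogue requires care because the cusps of $\Gamma_1(27)$ are parametrized differently, and because the relevant sum now has an extra layer (residues mod $27$ rather than mod $3$), so the factor $\tfrac{(1+T)^{3^n}-1}{T}$ appearing in the $\Gamma_1(p)$ case is replaced by $\tfrac{(1+T)^{3^n}-1}{(1+T)^3-1}$, and one must argue carefully that the $\mu$-invariant of the whole expression is exactly $1$ rather than $0$ or $\ge2$ — i.e. that the residual sum over mod-$27$ classes, weighted by $\phi_9$-values, does not itself vanish mod $9$ at the critical $T$-degree. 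This is precisely where the explicit $\phi_9$ table (and hence the SageMath verification underlying Theorem~\ref{thm:congmodsymb3}) is indispensable: the clean formula $3^n-2$ is not formal, it reflects a specific non-vanishing of a small linear combination of the tabulated values.
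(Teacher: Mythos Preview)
Your proposal is correct and follows essentially the same route as the paper: invoke Theorem~\ref{thm:congmodsymb3} to reduce to $\theta_{n,\phi_9}$, observe that all the values $\phi_9(\{\infty\})(1)-\phi_9(\{a/3^{n+1}\})(1)$ lie in $3\ZZ/9\ZZ$ (so $\mu=1$ and one divides by $3$ to work modulo $3$), then collapse the sum via a geometric-series identity and an explicit check of the residual coefficients, arriving at $T\cdot\dfrac{(1+T)^{3^n}-1}{T^3}\equiv T^{3^n-2}\pmod 3$.

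One simplification you did not quite anticipate, and which resolves the internal tension in your write-up between ``residues mod $27$'' and the denominator $(1+T)^3-1$: although $\phi_9$ has level $\Gamma_1(27)$ and the cusp of $\{a/3^{n+1}\}$ indeed depends on $a\bmod 27$ up to sign, inspection of the explicit table for $\phi_9$ shows that the \emph{value} $\phi_9(\{a/3^{n+1}\})(1)$ depends only on $a\bmod 9$. The paper uses exactly this, grouping by $a\bmod 9$ (equivalently $i_{n,a}\bmod 3$), which produces your denominator $(1+T)^3-1\equiv T^3$ and a residual of degree at most $2$; the explicit values then give residual $2+(1+T)+0\cdot(1+T)^2\equiv T$, and hence $\lambda=3^n-2$. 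Had you partitioned by $a\bmod 27$ as written, the denominator would be $(1+T)^9-1\equiv T^9$ and the residual would have degree at most $8$; the argument would still go through, just with more bookkeeping.
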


\begin{proof}
    Let $\phi_9$ be the boundary symbol as in Theorem~\ref{thm:congmodsymb3}. For $a \in (\ZZ/3^{n+1} \ZZ)^\times$, let $C'_{n,a} =  \phi_9(\{\infty\})(1) - \phi_9(\{a/3^{n+1} \})(1).$ For all $a \in (\ZZ/3^{n+1} \ZZ)^\times$, one can verify that $C'_{n,a} \in 3\ZZ / 9 \ZZ$. Thus, it makes sense to define $C_{n,a} = \frac{1}{3}C'_{n,a}$ and to consider $C_{n,a}$ as an element of $\ZZ / 3 \ZZ$.

    For each $a\in(\ZZ/3^{n+1}\ZZ)^\times$, define $0 \leq i_{n,a} \leq 3^n - 1$ to be the unique integer such that 
    $$\frac{a}{\omega_3(a)}\equiv 4^{i_{n,a}}\pmod { 3^{n+1}}.$$ 
    Note that this is well defined as $4$ is a generator of the cyclic group $(1+3\ZZ)/(1+3^{n+1}\ZZ)$, to which $a/\omega_3(a)$ belongs.
    
    Recall that $\pi_n:G_{n+1}\twoheadrightarrow \cG_n\cong\ZZ/3^n\ZZ$ is the natural projection map. Our discussion above tells us that the group $\cG_n$ is generated by $\pi_n(\sigma_4)$. Furthermore,    $$\pi_n(\sigma_a)=\pi_n(\sigma_{-a})=\pi_n(\sigma_4)^{i_{n,a}}.$$
    It follows from Theorem~\ref{thm:congmodsymb3} and the isomorphism given by \eqref{eq:iso-81} that 
    \begin{align*}
        \frac{1}{9}\theta_{n, \Delta} & \equiv \sum_{a \in (\ZZ / 3^{n+1}
        \ZZ)^\times} C'_{n,a} (1 + T)^{i_{n,a}}\\
         & \equiv \sum_{j = 0}^{3^n - 1} (C'_{n, 4^{j}} + C'_{n, -4^{j }}) (1 + T)^j\pmod 9.
        \end{align*}
        Thus, \[
        \frac{1}{27}\theta_{n, \Delta}  \equiv \sum_{j = 0}^{3^n - 1} (C_{n, 4^{j}} + C_{n, -4^{j }}) (1 + T)^j\pmod 3.
        \]

Similarly to the proof of Lemma~\ref{lem:cusps}, it follows from \cite[Proposition~3.8.3]{DS} and the definition of $\phi_9$ that if $a$ is an integer coprime to $3$, then 
$\{a/3^{n+1}\}$ is in the same cusp of $\Gamma_1(27)$ as $\{r/9\}$ where $1\le r\le 8$ is the unique integer such that $a\equiv r\mod 9$. Since $\phi_9$ is invariant on a cusp of $\Gamma_1(27)$,  the sum in the last congruence above can be rewritten as:\begin{align*}
     \ & (C_{n,1} + C_{n, 8})\left[ \sum_{\substack{j = 0 \\ j \equiv 0 \text{ (mod }3) }}^{3^n - 1} (1 + T)^j \right] + (C_{n,4} + C_{n, 5}) \left[ \sum_{\substack{j = 0 \\ j \equiv 1 \text{ (mod }3)}}^{3^n - 1} (1 + T)^j \right] \\
        & + (C_{n,2} + C_{n, 7}) \left[ \sum_{\substack{j = 0 \\ j \equiv 2 \text{ (mod } 3)}}^{3^n - 1} (1 + T)^j \right]\\
         \equiv\ & (0 + 0) \left[ \sum_{\substack{j = 0 \\ j \equiv 0 \text{ (mod } 3)}}^{3^n - 1} (1 + T)^j \right] + (2 + 2) \left[ \sum_{\substack{j = 0 \\ j \equiv 1 \text{ (mod } 3)}}^{3^n - 1} (1 + T)^j \right] \\
        & + (1 + 1) \left[ \sum_{\substack{j = 0 \\ j \equiv 2 \text{ (mod } 3)}}^{3^n - 1} (1 + T)^j \right]\\
         \equiv\ & (1+T)\cdot \frac{(1 + T)^{3^n} - 1}{T^3} +2 \cdot  (1 + T)^2 \cdot \frac{(1 + T)^{3^n} - 1}{T^3} \\
         \equiv\  & 2 \cdot  T^{3^n - 2}(1+T)\pmod3.
\end{align*}
    Therefore, we conclude that $\mu \left(\frac{1}{27}\theta_{n,\Delta}\right)=0$ and $ \lambda \left(\theta_{n,\Delta}\right) = 3^n - 2$  for all $n \geq 1$, as desired.
\end{proof}

\section{Appendix: Data used in the proof of Theorem~\ref{thm:congmodsymb3}}

\begin{longtable}[c]{| c | c | c |}
    \caption{Values of $\beta_9{\varphi^+_{\Delta}}$ and $\phi_9$ on $S_{27}$ \label{tab:long}}\\

    \hline
    \multicolumn{3}{| c |}{Beginning of Table\ref{tab:long}}\\
    \hline
    $\{ r\} - \{ s \} \in S _{27}$ & $\beta_9{\varphi^+_{\Delta}}(\{ r\} - \{ s \})$ & $\phi_9(\{r \})(1) - \phi_9(\{ s \})(1)\pmod{9}$ \\
    \endfirsthead
    
    \hline
    \multicolumn{3}{|c|}{Continuation of Table \ref{tab:long}}\\
    \hline
    $\{ r\} - \{ s \} \in S _{27}$ & $\alpha_9\overline{\varphi^+_{\Delta}}(\{ r\} - \{ s \})$ & $\phi_9(\{r \})(1) - \phi_9(\{ s \})(1)\pmod{9}$ \\
    \hline
    \hline
    \endhead

    \hline
    \endfoot

    \hline
    \multicolumn{3}{| c |}{End of Table\ref{tab:long}}\\
    \hline
    \endlastfoot

     \hline
     \hline
     $\{-2/27 \} - \{ -1/14\}$ & $5$ & $5$ \\
     \hline
     $\{5/27 \} - \{ 3/16\}$ & $2$ & $2$ \\
     \hline
     $\{-8/27 \} - \{ -5/17\}$ & $8$ & $8$ \\
     \hline
     $\{-10/27 \} - \{ -7/19\}$ & $8$ & $8$ \\
     \hline
     $\{4/27 \} - \{ 3/20\}$ & $2$ & $2$ \\
     \hline
     $\{11/27 \} - \{ 9/22\}$ & $5$ & $5$ \\
     \hline
     $\{7/27 \} - \{ 6/23\}$ & $5$ & $5$ \\
     \hline
     $\{-2/9 \} - \{ -5/23\}$ & $5$ & $5$ \\
     \hline
     $\{-3/17 \} - \{ -4/23\}$ & $3$ & $3$ \\
     \hline
     $\{7/18 \} - \{ 9/23\}$ & $5$ & $5$ \\
     \hline
     $\{-5/19 \} - \{ -6/23\}$ & $3$ & $3$ \\
     \hline
     $\{-12/41 \} - \{ -7/24\}$ & $3$ & $3$ \\
     \hline
     $\{-9/43 \} - \{ -5/24\}$ & $0$ & $0$ \\
     \hline
     $\{-5/17 \} - \{ -7/24\}$ & $6$ & $6$ \\
     \hline
     $\{-4/19 \} - \{ -5/24\}$ & $6$ & $6$ \\
     \hline
     $\{-2/47 \} - \{ -1/24\}$ & $0$ & $0$ \\
     \hline
     $\{-13/27 \} - \{ -12/25\}$ & $2$ & $2$ \\
     \hline
     $\{-9/32 \} - \{ -7/25\}$ & $3$ & $3$ \\
     \hline
     $\{-1/6 \} - \{ -4/25\}$ & $3$ & $3$ \\
     \hline
     $\{-2/7 \} - \{ -7/25\}$ & $0$ & $0$ \\
     \hline
     $\{-4/9 \} - \{ -11/25\}$ & $2$ & $2$ \\
     \hline
     $\{-3/37 \} - \{ -2/25\}$ & $6$ & $6$ \\
     \hline
     $\{-4/11 \} - \{ -9/25\}$ & $0$ & $0$ \\
     \hline
     $\{5/14 \} - \{ 9/25\}$ & $3$ & $3$ \\
     \hline
     $\{5/42 \} - \{ 3/25\}$ & $6$ & $6$ \\
     \hline
     $\{7/16 \} - \{ 11/25\}$ & $0$ & $0$ \\
     \hline
     $\{5/18 \} - \{ 7/25\}$ & $2$ & $2$ \\
     \hline
     $\{3/19 \} - \{ 4/25\}$ & $6$ & $6$ \\
     \hline
     $\{15/47 \} - \{ 8/25\}$ & $0$ & $0$ \\
     \hline
     $\{5/21 \} - \{ 6/25\}$ & $3$ & $3$ \\
     \hline
     $\{7/22 \} - \{ 8/25\}$ & $3$ & $3$ \\
     \hline
     $\{11/23 \} - \{ 12/25\}$ & $3$ & $3$ \\
     \hline
     $\{1/27 \} - \{ 1/26\}$ & $8$ & $8$ \\
     \hline
     $\{1/3 \} - \{ 9/26\}$ & $3$ & $3$ \\
     \hline
     $\{-41/355 \} - \{ -3/26\}$ & $6$ & $6$ \\
     \hline
     $\{-1/5 \} - \{ -5/26\}$ & $6$ & $6$ \\
     \hline
     $\{151/357 \} - \{ 11/26\}$ & $6$ & $6$ \\
     \hline
     $\{-3/7 \} - \{ -11/26\}$ & $3$ & $3$ \\
     \hline
     $\{69/359 \} - \{ 5/26\}$ & $0$ & $0$ \\
     \hline
     $\{1/9 \} - \{ 3/26\}$ & $8$ & $8$ \\
     \hline
     $\{-125/361 \} - \{ -9/26\}$ & $0$ & $0$ \\
     \hline
     $\{-3/11 \} - \{ -7/26\}$ & $3$ & $3$ \\
     \hline
     $\{-14/363 \} - \{ -1/26\}$ & $0$ & $0$ \\
     \hline
     $\{14/365 \} - \{ 1/26\}$ & $6$ & $6$ \\
     \hline
     $\{4/15 \} - \{ 7/26\}$ & $0$ & $0$ \\
     \hline
     $\{127/367 \} - \{ 9/26\}$ & $3$ & $3$ \\
     \hline
     $\{-2/17 \} - \{ -3/26\}$ & $0$ & $0$ \\
     \hline
     $\{-71/369 \} - \{ -5/26\}$ & $8$ & $8$ \\
     \hline
     $\{8/19 \} - \{ 11/26\}$ & $0$ & $0$ \\
     \hline
     $\{-157/371 \} - \{ -11/26\}$ & $3$ & $3$ \\
     \hline
     $\{4/21 \} - \{ 5/26\}$ & $6$ & $6$ \\
     \hline
     $\{43/373 \} - \{ 3/26\}$ & $6$ & $6$ \\
     \hline
     $\{-8/23 \} - \{ -9/26\}$ & $6$ & $6$ \\
     \hline
     $\{-101/375 \} - \{ -7/26\}$ & $3$ & $3$ \\
     \hline
     $\{-1/25 \} - \{ -1/26\}$ & $3$ & $3$ \\
\end{longtable}

\subsection*{Data availability statement}
The authors declare that the data supporting the findings of this study are available within the paper, its source code is available on \href{https://github.com/anthonydoyon/Ramanujan-s-tau-and-MT-elts}{https://github.com/anthonydoyon/Ramanujan-s-tau-and-MT-elts}.
\subsection*{Conflict of interest statement}All authors have no conflicts of interest.
\bibliographystyle{amsalpha}
\bibliography{references}

\end{document}